\newcommand\ga{\gamma}
\newcommand{\CC}{\ensuremath{\mathbb{C}}}
\newcommand{\ZZ}{\ensuremath{\mathbb{Z}}}
\newcommand{\QQ}{\ensuremath{\mathbb{Q}}}
\newcommand{\NN}{\ensuremath{\mathbb{N}}}
\newcommand{\hol}{\ensuremath{\mathcal{O}}}
\newcommand{\PP}{\ensuremath{\mathbb{P}}}
\newcommand{\ra}{\ensuremath{\rightarrow}}
\def\eea{\end{eqnarray*}}
\def\bea{\begin{eqnarray*}}
\newcommand\dual{\mathrel{\raise3pt\hbox{$\underline{\mathrm{\thinspace d
\thinspace}}$}}}
\newcommand\qe{\ifhmode\unskip\nobreak\fi\quad $\Box$}       
\def\BOX{\hfill\lower.5\baselineskip\hbox{$\Box$}}
\newtheorem{theo}{Theorem}[section]
\newtheorem{remarkk}[theo]{Remark}
\newenvironment{rem}{\begin{remarkk}\rm}{\end{remarkk}}
\newtheorem{defin}[theo]{Definition}
\newenvironment{definition}{\begin{defin}\rm}{\end{defin}}
\newtheorem{prop}[theo] {Proposition}
\newtheorem{cor}[theo]{Corollary}
\newtheorem{lemma}[theo]{Lemma}
\newtheorem{example}[theo]{Example}
\newtheorem{conj}[theo]{Conjecture}
\newtheorem{problem}[theo]{Problem}
\newtheorem{question}[theo]{Question}
\newcommand{\BT}{\ensuremath{\mathbb{T}}}
\DeclareMathOperator{\Sing}{Sing}
\DeclareMathOperator{\lcm}{lcm}
\begin{document}

\title[Product-Quotient Surfaces]{Product-Quotient Surfaces: Results and Problems}
\author{I. Bauer}

\thanks{The present survey is an extended version of a talk given at the Kinosaki Symposium on Algebraic Geometry 2011 reviewing several joint articles with F. Catanese, F. Grunewald and R. Pignatelli, as well as some yet unpublished results. All the research contained in these papers took place in the realm of the DFG Forschergruppe 790
"Classification of algebraic surfaces and compact complex manifolds".}

\date{\today}
\maketitle
\tableofcontents
\section*{Introduction}

The following is an extended version of a talk given at the Kinosaki Symposium on Algebraic Geometry in October 2011. The aim is to give an overview of product-quotient surfaces, the results that have been proven so far in collaboration with several different authors as well as pointing out some of the the problems that are still open.

In what follows we will use the basic notations from the classification theory of complex projective surfaces, in particular
the basic numerical invariants $K_S^2$, $p_g :=h^0(S, \Omega^2_S)$, $q(S) :=h^1(S, \hol_S)$; 
the reader unfamiliar with these
may consult  e.g. \cite{beau}.

By Gieseker's theorem (cf. \cite{gieseker}) and standard inequalities (cf. \cite[thm. 2.3 and the 
following discussion]{surveypg=0})  minimal surfaces of general type with $p_g = 0$
 yield  a
finite number of irreducible components of the
moduli space of surfaces of general type. 

Even if surfaces of general type with $p_g=0$  are the surfaces of general type which achieve
the minimal value $1$ for the
holomorphic  Euler-Poincar\'e characteristic $\chi(S) := p_g(S) -q(S)
+1$, the guess that
they should be ``easier'' to understand than  other surfaces with
higher invariants is false by all means.

We refer to Mumford's provocative question, which he posed after in the previous decade it became clear that constructing surfaces of general type with $p_g=0$ is not easy at all. We refer to table 1 in \cite{bpv} to confirm that there were quite few examples known even in 2004.

\begin{question}[1980, Montreal]
 Can a computer classify smooth complex projective surfaces with $p_g=0$?
\end{question}
Our approach will be very much in the spirit of Mumford's question. Product-quotient surfaces are surfaces which can be constructed with the help of a computer algebra program as MAGMA, even a classification up to a certain point can be obtained by a computer. Still one quickly arrives at a limit where certain geometric features cannot be detected by a computer anymore.

We would like to refer the reader to a recent survey on surfaces of general type with $p_g = 0$
\cite{surveypg=0}, for a historical account and an update on what is known.

The paper is organized as follows: in the first two sections  we define what are product-quotient surfaces,recall their combinatorial properties and explain the necessary results to obtain the algorithm developed in \cite{bp}.

In the third chapter we summarize the results on the classification of product-quotient surfaces of general type with $p_g=q=0$ that were obtained in a series of papers (cf. \cite{bacat}, \cite{pg=q=0}, \cite{4names}, \cite{bp}). 
In the above articles a complete classification of product-surfaces $S$ of general type with $p_g= q=0$, under the additional hypothesis that $S$ is minimal,  is given. It is in general a highly non trivial problem to decide whether a given product-quotient surface (where the singular model has non-canonical singularities) is minimal or not. Chapter 4 deals with the problem of finding rational curves on product-quotient surfaces, and exhibits a non-minimal example (``the fake Godeaux surface'', discussed at length in \cite{bp}), where its minimal model is computed.
The last section is dedicated to the problems which are still open and prevent us still from finishing the classification of product-quotient surfaces of general type with $p_g=0$ as well as  some yet unpublished partial results (in collaboration with R. Pignatelli) which go in the direction of a solution.

\section{What are product-quotient surfaces}

We consider the following situation:
let $G$ be a finite group acting on two compact Riemann surfaces $C_1$, $C_2$ of respective genera $g_1, g_2 \geq 2$.
We shall consider the diagonal action of $G$ on $C_1 \times C_2$ and
in this situation we say for short: the action of $G$ on
$C_1 \times C_2$ is {\em unmixed}. By \cite{FabIso} we may assume w.l.o.g.
that $G$ acts faithfully on both factors.
\begin{defin}\label{prodquot}
The minimal resolution $S$ of the singularities of $X = (C_1 \times C_2)/ G$, where $G$ is a finite group with an unmixed
action on the direct product of two compact Riemann surfaces $C_1$, $C_2$ of respective genera at least
two, is called a {\em product-quotient surface}.

$X$ is called  {\em the quotient model} of the product-quotient surface.
\end{defin}

\begin{rem} \
\begin{enumerate}
 \item  Note that there are finitely many points on $C_1 \times C_2$ with non trivial stabilizer, which is automatically cyclic. Hence the quotient surface $X:= (C_1 \times C_2) / G$ has
a finite number of cyclic quotient singularities. 

Recall that every cyclic
quotient singularity is locally analytically isomorphic to the quotient of $\mathbb{C}^2$ by
      the action of a diagonal linear automorphism with eigenvalues
      $\exp(\frac{2\pi i}{n})$, $\exp(\frac{2\pi i a}{n})$ with $g.c.d(a,n) = 1$; this is called a 
{\em singularity of type $\frac{1}{n}(1,a)$}. Two singularities of respective types $\frac{1}{n}(1,a)$ and 
$\frac{1}{n'}(1,a')$ are locally analytically isomorphic if and only if $n=n'$ and either $a=a'$ or $aa'
\equiv 1 \mod n$.
\item It is well known that the exceptional divisor $E$ of the minimal resolution of a cyclic quotient singularity of type  $\frac{1}{n}(1,a)$ is a {\em Hirzebruch-Jung string}, i.e., $E = \bigcup_{i=1}^l E_i$ where all $E_i$ are smooth rational curves, $E_i^2 = -b_i$, $E_i\cdot E_{i+1}
= 1$ for $i \in\{1, \ldots , l-1\}$ and $E_i\cdot E_j=0$ otherwise. The $b_i$ are given by the formula
$$
\frac{n}{a} = b_1 - \frac{1}{b_2 - \frac{1}{b_3 - \ldots}}.
$$
\item We denote by $K_X$ the canonical (Weil) divisor on the normal surface $X$
corresponding to $i_* ( \Omega^2_{X^0})$, $ i\colon X^0 \ra X$ being the
inclusion of the
smooth locus of $X$. According to Mumford we have an intersection
product with values
in $\QQ$ for Weil divisors on a normal surface, and in particular we may consider
the self intersection  of the canonical divisor,
$$
K_X^2 =
\frac{8 (g(C_1) - 1) (g(C_2) - 1)}{|G|}
\in
      \mathbb{Q},
$$
 which is not necessarily an integer.

Moreover, we have (in a neighborhood of $x$)
$$ K_S = \pi^* K_X + \sum_{i=1}^l a_i E_i,
$$  where the rational numbers $a_i$ are determined by the conditions
    $$(K_S + E_j)E_j =
-2, \ \ \
(K_S -
\sum_{i=1}^l a_iE_i)E_j = 0, \ \
\forall j= 1, \dots ,l.$$
\item Note that $S$ is in general not a minimal model. In fact, to handle this is one of the biggest difficulties when trying to push the classification to the most general case, i.e., admitting no artificial restrictions on the singularities of $X$.
\item Since the minimal resolution  $\pi \colon S
\rightarrow X$ of the
singularities of $X$ replaces each singular point by a tree of smooth
rational curves, we have, by van Kampen's theorem, that
$\pi_1(X) = \pi_1(S)$. A presentation of this fundamental group can easily be given using a result of Armstrong (cf. \cite{armstrong1}, \cite{armstrong2}). Unfortunately a presentation of a group might not give much information about the group (e.g. it is in general undecidable whether the group is trivial or not). Using a structure theorem for the fundamental group of a product-quotient surface proved in \cite{4names}, we can use the fundamental group of $X$ (or $S$) as invariant to distinguish different connected components of the moduli space of surfaces of general type.

\end{enumerate}

\end{rem}
We will need only the following combinatorial information about the singular locus $\Sing(X)$ of the quotient model $X$ of a product-quotient surface:
\begin{defin} \
 \begin{enumerate}
  \item Let $X$ be a normal complex surface and suppose that the singularities of $X$ are cyclic quotient
singularities. Then we define  a  {\em representation of the basket of singularities of $X$}  to be  
a multiset
$$
\mathcal{B}(X) := \left\{\lambda \times \left(\frac{1}{n} (1,a)\right) : X \ {\rm has \ exactly} \ \lambda \
{\rm singularities }  \ {\rm  of \ type} \ \frac{1}{n} (1,a) \right\}.
$$

I.e., $\mathcal{B}(X) = \{2 \times \frac 13 (1,1), \frac 14 (1,3) \}$ means that the singular 
locus of $X$ consists of two $\frac 13 (1,1)$-points and one $\frac 14 (1,3)$-point.
\item Consider the set of multisets of the form 

$$
\mathcal{B} := \left\{\lambda \times \left(\frac{1}{n} (1,a)\right) : a, n,   \lambda \in \mathbb{N}, \ 
a < n, \ \gcd(a,n)=1 \right\},
$$
and consider the equivalence relation generated by  "$\frac{1}{n}(1,a)$ is equivalent to $\frac1n(1,a')$", where $a' = a^{-1}$ in
$(\ZZ / n \ZZ)^*$. A {\em basket of singularities} is then an equivalence class.
 \end{enumerate}

\end{defin}

The invariants of the basket of singularities, which are used in \cite{bp}, are the following:

\begin{defin}\label{oldinv}
Let $x$ be a singularity of type $\frac 1n (1,a)$ with
$\gcd(n,a) = 1$ and let $1 \leq a' \leq n-1$ such that $a' = a^{-1}$ in
$(\ZZ / n \ZZ)^*$. Moreover, write $\frac na$ as a continued fraction:
$$
\frac{n}{a} = b_1 - \frac{1}{b_2 - \frac{1}{b_3 - \ldots}} =:[b_1, \ldots, b_l].
$$
Then we define the following correction terms:

\begin{itemize}
 \item[i)] $k_x := k(\frac 1n (1,a)):= -2 + \frac{2+a+a'}{n} + \sum(b_i-2) \geq 0$;
\item[ii)] $e_x := e(\frac 1n (1,a)):= l + 1 - \frac 1n \geq 0$;
\item[iii)] $B_x := 2e_x + k_x$.
\end{itemize}

 Let ${\mathcal B}$ be the basket of singularities of $X$ (recall that $X$ is normal and has only cyclic quotient singularities).
Then we use the following notation
$$k({\mathcal B}):=\sum_{x \in {\mathcal B}} k_x, \ \ \ 
e({\mathcal B}):=\sum_{x \in {\mathcal B}} e_x, \ \ \ B({\mathcal B}):=\sum_{x \in {\mathcal B}} B_x.
$$
\end{defin}

With these invariants $K_S^2$ and $e(S)$ of a product-quotient surface can be expressed as follows.

\begin{prop}[\cite{4names}, prop. 2.6, and \cite{polmi}, cor. 3.6]\label{k2e}
Let $S \ra X:=( C_1 \times C_2 )/G$ be the minimal resolution of singularities of $X$. Then we have the following two formulae for the self intersection of the canonical divisor of $S$ and the topological Euler characteristic of $S$:
\begin{equation*}
 K_S^2 = \frac{8 (g_1 -1)(g_2-1)}{|G|} - k({\mathcal B});
\end{equation*}
\begin{equation*}
e(S) = \frac{4 (g_1 -1)(g_2-1)}{|G|} + e({\mathcal B}).
\end{equation*}
\end{prop}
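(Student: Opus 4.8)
The plan is to establish the two formulae by separate arguments: the $K_S^2$ formula by intersection theory on the resolution, and the $e(S)$ formula by a topological count. For $K_S^2$, I would start from the relation $K_S = \pi^* K_X + \sum_x D_x$ recorded in the remark above, where the sum runs over $x \in \Sing(X)$ and $D_x = \sum_{i=1}^{l} a_i E_i$ is the discrepancy divisor supported on the Hirzebruch-Jung string over $x$. Squaring and using Mumford's intersection product on the normal surface $X$ gives $(\pi^* K_X)^2 = K_X^2$, while the projection formula yields $\pi^* K_X \cdot E_i = K_X \cdot \pi_* E_i = 0$; since the strings over distinct points are disjoint, all cross terms vanish and $K_S^2 = K_X^2 + \sum_x D_x^2$. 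As $K_X^2 = 8(g_1-1)(g_2-1)/|G|$ is already given, it remains to prove the purely local identity $D_x^2 = -k_x$ for each singularity of type $\frac1n(1,a)$.

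To compute $D_x^2$ I would exploit the defining conditions of the $a_i$. Adjunction on the smooth rational curve $E_j$ gives $K_S \cdot E_j = b_j - 2$, and the condition $(K_S - D_x)\cdot E_j = (\pi^* K_X)\cdot E_j = 0$ gives $D_x \cdot E_j = b_j - 2$, whence $D_x^2 = \sum_j a_j(b_j - 2)$. Setting $c_j := a_j + 1$ and substituting into the intersection equations $\sum_i a_i(E_i \cdot E_j) = b_j - 2$ turns them into the classical Hirzebruch-Jung recurrence $c_{j-1} + c_{j+1} = b_j c_j$ with boundary values $c_0 = c_{l+1} = 1$. A short telescoping argument then collapses $\sum_j c_j(b_j - 2)$ to $2 - c_1 - c_l$, and after cancelling the common term $\sum_j(b_j - 2)$ against the term of the same shape in $k_x$, the target identity $D_x^2 = -k_x$ reduces to the single boundary evaluation $c_1 + c_l = (2 + a + a')/n$. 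I expect this last step to be the main obstacle: it is precisely the point where the arithmetic of the continued fraction $[b_1,\dots,b_l]$ and its reversal enters, and where the inverse $a' \equiv a^{-1} \bmod n$ appears, through the identification of the end log-discrepancies $c_1$ and $c_l$ with $(1+a')/n$ and $(1+a)/n$. This is the standard, but genuinely computational, heart of the argument.

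For $e(S)$ I would argue topologically. Let $F \subset C_1 \times C_2$ be the finite set of points with non-trivial (cyclic) stabiliser; since $G$ acts freely on the complement one has $e\big((C_1\times C_2 \setminus F)/G\big) = (4(g_1-1)(g_2-1) - |F|)/|G|$, using $e(C_1 \times C_2) = 4(g_1-1)(g_2-1)$. Re-adding the images of $F$, one point per $G$-orbit, and writing $|F|/|G| = \sum_x 1/n_x$ and $|F/G| = \sum_x 1$, gives $e(X) = 4(g_1-1)(g_2-1)/|G| + \sum_x(1 - 1/n_x)$. Finally the minimal resolution replaces each singular point by a chain of $l_x$ rational curves, an operation that changes the Euler number by $+l_x$, so $e(S) = e(X) + \sum_x l_x = 4(g_1-1)(g_2-1)/|G| + \sum_x(l_x + 1 - 1/n_x)$, which is exactly $4(g_1-1)(g_2-1)/|G| + e(\mathcal{B})$ by the definition $e_x = l + 1 - 1/n$. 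Here the only points to check carefully are the identification of the stabiliser orders with the integers $n$ of the basket and the orbit bookkeeping, both of which are routine.
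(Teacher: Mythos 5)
Your proposal is correct, and in fact this survey gives no proof of Proposition \ref{k2e} at all (it is quoted from \cite{4names} and \cite{polmi}); the arguments there are essentially yours, namely the discrepancy computation $K_S^2=K_X^2+\sum_x D_x^2$ on the Hirzebruch--Jung strings together with the stratified Euler-number count $e(S)=e(X)+\sum_x l_x$. The single step you leave as a black box---that the end log-discrepancies are $c_1=\frac{1+a}{n}$ and $c_l=\frac{1+a'}{n}$ (up to relabelling the ends of the chain), hence $c_1+c_l=\frac{2+a+a'}{n}$---is the standard fact about cyclic quotient singularities, provable either torically (the log discrepancy of the exceptional divisor attached to a lattice point is the value there of the linear form equal to $1$ on the two rays, and $\frac1n(1,a)$, $\frac1n(a',1)$ are the extremal lattice points) or by induction on $l$ using that the reversed fraction $[b_l,\dots,b_1]$ expands $n/a'$, so invoking it leaves no genuine gap.
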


A direct consequence of the above is the following:
\begin{cor}\label{k2basket}
Let $S \ra X:=( C_1 \times C_2 )/G$ be the minimal resolution of singularities of $X$. Then
$$
K_S^2 = 8 \chi(S) - \frac 13 B({\mathcal B}).
$$
\end{cor}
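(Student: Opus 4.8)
The plan is to derive the corollary purely algebraically by combining the two formulae in Proposition~\ref{k2e} with the definitions of the correction terms in Definition~\ref{oldinv}. Since $\chi(S) = \frac{1}{12}\bigl(K_S^2 + e(S)\bigr)$ by Noether's formula, and both $K_S^2$ and $e(S)$ are expressed in Proposition~\ref{k2e} in terms of the common quantity $\frac{(g_1-1)(g_2-1)}{|G|}$ and the basket invariants, the strategy is to eliminate this quantity between the three relations. First I would write $N := \frac{8(g_1-1)(g_2-1)}{|G|}$, so that $K_S^2 = N - k(\mathcal{B})$ and $e(S) = \tfrac12 N + e(\mathcal{B})$. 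Then Noether gives $12\chi(S) = K_S^2 + e(S) = \tfrac32 N - k(\mathcal{B}) + e(\mathcal{B})$.

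Next I would solve for $N$ from the second display and substitute. From $e(S) = \tfrac12 N + e(\mathcal{B})$ we get $N = 2e(S) - 2e(\mathcal{B})$, hence $K_S^2 = 2e(S) - 2e(\mathcal{B}) - k(\mathcal{B})$. The goal is to rewrite the right-hand side so that only $\chi(S)$ and the combined basket invariant $B(\mathcal{B}) = 2e(\mathcal{B}) + k(\mathcal{B})$ appear. Using $12\chi(S) = K_S^2 + e(S)$ to replace $e(S) = 12\chi(S) - K_S^2$, I would obtain $K_S^2 = 2(12\chi(S) - K_S^2) - 2e(\mathcal{B}) - k(\mathcal{B})$, i.e. $3K_S^2 = 24\chi(S) - \bigl(2e(\mathcal{B}) + k(\mathcal{B})\bigr) = 24\chi(S) - B(\mathcal{B})$. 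Dividing by $3$ yields exactly $K_S^2 = 8\chi(S) - \tfrac13 B(\mathcal{B})$, as claimed.

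The computation is entirely formal once one has the two formulae and Noether's relation, so there is no genuine obstacle; the only point requiring care is making sure that the definition $B_x = 2e_x + k_x$ is summed correctly over the basket, so that $B(\mathcal{B}) = 2e(\mathcal{B}) + k(\mathcal{B})$ — this is precisely the combination that survives the elimination, which is why the invariant $B_x$ is defined the way it is in Definition~\ref{oldinv}. I would state explicitly that $\chi(S)$ is used in the sense $\chi(S) = \frac{1}{12}(K_S^2 + e(S))$, valid because $S$ is a smooth projective surface (the minimal resolution), so that Noether's formula applies directly to $S$ rather than to the singular model $X$.
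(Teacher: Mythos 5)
Your proof is correct and follows essentially the same route as the paper: both eliminate the common quantity $\frac{8(g_1-1)(g_2-1)}{|G|}$ between the two formulae of Proposition~\ref{k2e} to obtain the relation $2e(S) = K_S^2 + B(\mathcal{B})$, and then apply Noether's formula $12\chi(S) = K_S^2 + e(S)$ to the smooth surface $S$ and solve for $K_S^2$. Your explicit remark that Noether's formula is applied to the resolution $S$ rather than the singular model $X$ is a point the paper leaves implicit.
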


\begin{proof}
 By prop. \ref{k2e} we have 
$$
e(S) = \frac{K_S^2 + B(\mathcal{B})}{2}.
$$
By Noether's  formula we obtain
$$
12\chi(S)=K^2_S+e(S) = \frac{3K_S^2 + B(\mathcal{B})}{2}
$$
\end{proof}

\section{The combinatorics of product-quotient surfaces}

The key point for studying product-quotient surfaces  is the fact that the geometry of the surface $S$ is encoded in the combinatorics of the finite group $G$.

We start with the following observations.\begin{rem}\label{general}
1) Let $S$ be a surface of general type. Then 
$p_g(S) \geq q(S) := h^1(S, \hol_S)$. In particular, $p_g = 0$ implies $q=0$. If $S$ is minimal, then $K_S^2 > 0$.

2) Let $S$ be a product-quotient surface with quotient model $X= (C_1 \times C_2)/G$. 
If $q(S) = 0$, then  $C_i/G \cong \PP^1$. If $S$ is of general type, then $g(C_i) \geq 2$.
\end{rem}

Since here we are interested only in {\em regular} surfaces (i.e., surfaces $S$ with $q(S) =0$), we only need to recall the definition of a special case of an orbifold surface group: a polygonal group, 
(cf. \cite{4names} for the general situation).

\begin{defin}\label{polgr}
A {\em polygonal group} of {\em signature} $(m_1, \dots m_r)$ is given by the following presentation:

\begin{equation*}
\mathbb T (m_1, \ldots ,m_r) :=
\langle c_1,\ldots, c_r |
    c_1^{m_1},\ldots ,c_r^{m_r}, c_1\cdot
\ldots \cdot c_r \rangle.
\end{equation*}

\end{defin}

Let $p, p_1, \dots , p_r \in \PP^1$ be $r+1$ different points and for each $1 \leq i \leq r$ choose 
a simple
geometric loop
$\gamma_i$  in $\pi_1(\PP^1 \setminus \{p_1, \dots , p_r \}, p)$ 
around $p_i$, such that $\gamma_1 \cdot \ldots \cdot \gamma_r = 1$.

Then $\mathbb T (m_1, \ldots ,m_r)$ is the factor
group
of $\pi_1(\PP^1 \setminus \{p_1, \dots , p_r \}, p)$
 by the subgroup normally generated by $\ga_1^{m_1}, \dots , \ga_r^{m_r}$.

Hence, by {\em Riemann's existence theorem}, any curve $C$ together with an action of a finite group $G$ on it
such that $C/G\cong \PP^1$ is determined (modulo automorphisms) 
by the following data:

1) the branch point set $\{p_1, \dots p_r \} \subset \PP^1$;

2) the kernel of the monodromy homomorphism $\pi_1 ( \PP^1 \setminus \{p_1,
\dots p_r \},p) \rightarrow G$ which, once chosen loops $\gamma_i$ as above, factors through 
$\BT (m_1, \ldots ,m_r)$, where $m_i$ is the branching index of $p_i$; therefore 
giving the monodromy homomorphism  is equivalent to give

2') an {\em appropriate orbifold} homomorphism
$$
\varphi \colon \BT (m_1, \ldots ,m_r) \rightarrow G,
$$
i.e., a  surjective homomorphism  such that
 $\varphi (c_i) $ is an element of order exactly $m_i$, 
with the property that
 the {\em Hurwitz' formula} for the genus $g$ of $C$
holds:
$$ 2g - 2 = |G|\left(-2 + \sum_{i=1}^r \left(1 -
\frac{1}{m_i}\right)\right).
$$

\begin{definition}
An $r$-tuple $(g_1, \ldots , g_r)$ of elements of a group $G$ is called a {\em spherical system of 
generators of $G$},
if $<g_1, \ldots , g_r> =G$ and $g_1 \cdot \ldots \cdot g_r =1$. 
 \end{definition}

Note that $(\varphi(c_1), \ldots , \varphi(c_r))$ is a spherical system of generators of $G$. Vice versa, a spherical system of 
generators of $G$ determines a polygonal group $\BT$ together with an appropriate orbifold homomorphism
$\varphi \colon \BT \rightarrow G$.

\noindent
Therefore a product-quotient surface $S$ of general type with $p_g = 0$
determines the following data 
\begin{itemize}
\item a finite group $G$;
\item two sets of points $\{p_1, \ldots, p_r\}$ and $\{q_1, \ldots, q_s\}$ in $\PP^1$;
\item two spherical systems of generators of $G$ of respective length $r$ and $s$.
\end{itemize}

Vice versa, the data above determine the product-quotient surface. 

\begin{rem}
 Different data may determine the same surface. This is in fact solved by considering the orbits of an action of a product of braid groups on the pairs of systems of spherical generators of a fixed group $G$ (cf. e.g. \cite{bp}).
\end{rem}

In order to get an algorithm that computes all product-quotient surfaces $S$ with fixed $\chi(S)$ and $K_S^2$ we need the following: 

\begin{lemma}
There are positive numbers  $D$, $M$, $R$, $B$, which depend explicitly (and only) on the basket of singularities singularities of $X$ such that:
\begin{enumerate}
\item  $K_S^2 = 8\chi - B$;
\item $r, s \leq R$, such that  $\forall \ i, j:  \ m_i, n_j \leq M$;
\item $|G| = \frac{K_S^2 + D}{2(-2+\sum_1^r(1-\frac{1}{m_i}))(-2+\sum_1^s(1-\frac{1}{n_i}))}$.
\end{enumerate}
\end{lemma}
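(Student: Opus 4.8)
The plan is to dispatch assertions (1) and (3) directly from the formulae already established, and to locate all the real difficulty in the boundedness assertion (2). For (1), set $B := \frac{1}{3}B(\mathcal{B})$; by Definition \ref{oldinv} this depends only on the basket, and Corollary \ref{k2basket} is verbatim the statement $K_S^2 = 8\chi - B$.

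For (3), abbreviate the two ``reduced orbifold Euler numbers'' of the signatures by
$$\Theta_1 := -2 + \sum_{i=1}^r\Big(1 - \tfrac{1}{m_i}\Big), \qquad \Theta_2 := -2 + \sum_{j=1}^s\Big(1 - \tfrac{1}{n_j}\Big).$$
Hurwitz' formula reads $2(g_k - 1) = |G|\,\Theta_k$ for $k=1,2$, so $(g_1-1)(g_2-1) = \tfrac14 |G|^2\Theta_1\Theta_2$. Substituting this into the first formula of Proposition \ref{k2e} and cancelling one factor of $|G|$ gives
$$2|G|\,\Theta_1\Theta_2 = K_S^2 + k(\mathcal{B}).$$
Putting $D := k(\mathcal{B})$, which again depends only on the basket, and solving for $|G|$ yields exactly the formula claimed in (3).

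The substance of the lemma is (2). Write $N := K_S^2 + D = 2|G|\,\Theta_1\Theta_2$, a fixed positive number once $\chi$ (hence $K_S^2$, by (1)) and the basket are prescribed. The decisive input is the classical arithmetic fact underlying Hurwitz' $84(g-1)$ theorem: whenever $-2 + \sum(1-\tfrac{1}{m_i})$ is positive it is at least $\tfrac{1}{42}$, with equality only for the signature $(2,3,7)$. Since $g_k \geq 2$ forces $\Theta_k > 0$, we get $\Theta_1,\Theta_2 \geq \tfrac{1}{42}$. Feeding this into $2|G|\Theta_1\Theta_2 = N$ bounds the group order,
$$|G| = \frac{N}{2\Theta_1\Theta_2} \leq \frac{42^2\,N}{2} =: M,$$
and, since each $m_i$ and $n_j$ is the order of an element of $G$, it divides $|G|$ and hence is at most $M$. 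To bound the lengths of the signatures, use $|G| \geq 1$ together with $\Theta_2 \geq \tfrac{1}{42}$ to obtain $\Theta_1 = \tfrac{N}{2|G|\Theta_2} \leq 21N$; as every summand satisfies $1 - \tfrac{1}{m_i} \geq \tfrac12$, we have $\tfrac{r}{2} \leq \Theta_1 + 2 \leq 21N + 2$, so $r \leq 42N + 4 =: R$, and symmetrically $s \leq R$.

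The one genuinely non-formal step, and the crux of the whole lemma, is the universal lower bound $\Theta_k \geq \tfrac{1}{42}$: it is precisely what converts the equation $2|G|\Theta_1\Theta_2 = N$ into an a~priori bound on $|G|$, and thereby makes the search space of the algorithm finite. I expect this to be the main obstacle; I would either cite it or re-derive it by the standard short case analysis on $r$ (no positive value occurs for $r \leq 2$; the minimum $\tfrac{1}{42}$ is attained at $r=3$; the value is strictly larger for $r \geq 4$). Everything else in the argument is substitution into Proposition \ref{k2e} and elementary estimation.
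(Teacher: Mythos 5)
Your proof is correct and is essentially the intended argument: the survey states this lemma without proof (it summarizes results of \cite{bp}), and the derivation there is exactly yours --- (1) is Corollary \ref{k2basket} with $B=\frac{1}{3}B(\mathcal{B})$, (3) follows by substituting Hurwitz's formula into the first equation of Proposition \ref{k2e} with $D=k(\mathcal{B})$, and (2) rests on the classical fact that a positive value of $-2+\sum(1-\frac{1}{m_i})$ is at least $\frac{1}{42}$, which bounds $|G|$ (hence every $m_i$, $n_j$) and then the signature lengths. Your parenthetical caveat that $M$ and $R$ are determined by the basket only once $\chi$ is fixed (so that $K_S^2=8\chi-B$ is itself determined) is the correct reading of the lemma's phrase ``depend only on the basket,'' which is how it is used in the algorithm with $\chi=1$.
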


From this we can deduce an algorithm as follows: 

\begin{enumerate}
\item Fix integers $\chi$ and $K^2$;
\item find all possible baskets $\mathcal{B}$ of singularities with $B(\mathcal{B}) = 8 - K^2$;
\item for a fixed basket find all signatures $(0;m_1, \ldots , m_r)$ satisfying $2)$;
\item for each pair of signatures check all groups of order as in $3)$, whether there is a surjective homomorphism ${\mathbb T}(0;m_1, \ldots , m_r) \rightarrow G$;
\item check the singularities of the surfaces in the output.
\end{enumerate}

Still to implement the algorithm in a way to be able to push through the computations we have to face several problems.

\begin{problem} \
\begin{itemize}
\item[i)] We have to search groups of a given order for generators, which have to fulfill certain conditions (orders as  512, 1024, 1536,...). These cases in fact have to be excluded in the general run of the program, and have to be treated by hand separately.
\item[ii)] The algorithm is very time and memory consuming especially for $K^2$ small compared to $\chi$. We have proved and implemented stronger estimates and conditions on $\Sing(X)$ and the possible signatures which allows us to get a complete list for $\chi = 1$ and $K_S^2 \geq -2$. 
\item[iii)] If  $X$ has non canonical singularities then  $S$ might not be minimal (e.g. $K_S^2 \leq 0$, even if $S$ of general type). This is the hardest problem to get hold on. Still we are struggling to get a complete solution (cf. section \ref{problems}).
\end{itemize}
\end{problem}

\section{The results}

We list here the results which were obtained in the last years in different collaborations with F. Catanese, F. Grunewald and R. Pignatelli.

\begin{theo}[\cite{bacat}, \cite{pg=q=0} \cite{4names},\cite{bp}]\label{classiso} \

1) Surfaces $S$ {\em isogenous to a product} (i.e., $S$ is an \'etale quotient of $C_1 \times C_2$ by a finite group $G$) with $p_g(S) = q(S) = 0$ form
17 irreducible connected components
of the moduli space $\mathfrak{M}_{(1,8)}^{can}$.

2) Surfaces with $p_g = 0$, whose canonical model is a singular
quotient  $X:=(C_1 \times C_2)/G$ by an
unmixed action of $G$ form 27 further irreducible families.

3) Minimal product-quotient surfaces with  $p_g=0$ such that the quotient model $X$ does not have
canonical singularities
form exactly further 32
irreducible families.

Moreover, $K^2_S = 8$ if and only if $S$ is isogenous to a product.

\end{theo}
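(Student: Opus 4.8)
The plan is to turn the classification into a finite computer search and then to resolve the geometry of the finitely many surfaces the search produces, exactly along the algorithm recalled above. By Riemann's existence theorem and the dictionary following Definition~\ref{polgr}, a product-quotient surface with $p_g=q=0$ is encoded, up to the action of a product of braid groups and of $\Aut(G)$, by a finite group $G$ together with two spherical systems of generators of signatures $(m_1,\dots,m_r)$ and $(n_1,\dots,n_s)$ satisfying the two Hurwitz relations with $g_1,g_2\geq 2$ (Remark~\ref{general}). Since $p_g=q=0$ forces $\chi(S)=1$, Corollary~\ref{k2basket} gives $K_S^2=8-\tfrac{1}{3}B(\mathcal B)$; as $B(\mathcal B)\geq 0$ and every nontrivial cyclic quotient singularity contributes $B_x>0$, this already yields $K_S^2\leq 8$ and pins $\mathcal B$ down to finitely many baskets once a lower cutoff on $K_S^2$ is fixed.

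With $\mathcal B$ and $K_S^2$ fixed, the Lemma stated above furnishes explicit bounds $R,M,D$: only finitely many signatures with $r\leq R$ and all branching indices $\leq M$ occur, and for each admissible pair of signatures the order $|G|$ is determined by item~(3) of that Lemma. One then lists all groups of the prescribed order and, in MAGMA, searches for surjective orbifold homomorphisms $\BT(m_1,\dots,m_r)\to G$ and $\BT(n_1,\dots,n_s)\to G$. The admissible pairs of spherical generating systems are grouped into orbits under the braid/automorphism action; each orbit is one irreducible family, and for every surviving datum one reads off $\Sing(X)$ from the local monodromies and checks that $S$ is of general type. The combinatorial explosion for $K_S^2$ small relative to $\chi$ is controlled by the sharper estimates on $\Sing(X)$ and on the signatures mentioned above, which make the range $K_S^2\geq -2$ (for the resolution) computationally accessible.

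The outputs now split according to $\Sing(X)$, and this trichotomy is the content of the three parts. If $\mathcal B=\emptyset$ then $X$ is smooth, the $G$-action on $C_1\times C_2$ is free, $S=X$ is isogenous to a product and $K_S^2=8$; Catanese's deformation result, that surfaces isogenous to a product sharing the same group-theoretic data form a single deformation class filling a connected component of $\mathfrak{M}^{can}_{(1,8)}$, upgrades the count of braid orbits to the $17$ connected components of part~1. If $X$ has only canonical singularities, i.e. only $A_{n-1}$-points of type $\tfrac{1}{n}(1,n-1)$, then $k(\mathcal B)=0$, the minimal resolution $\pi\colon S\to X$ is crepant, $X$ is the canonical model, $S$ is automatically minimal and $K_S^2=K_X^2<8$; these give the $27$ families of part~2. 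The ``moreover'' is immediate from $K_S^2=8-\tfrac{1}{3}B(\mathcal B)$: the value $K_S^2=8$ forces $B(\mathcal B)=0$, hence $\mathcal B=\emptyset$ and a free action, and conversely a free action gives an empty basket and $K_S^2=8$.

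The remaining data are those for which $X$ carries at least one non-canonical cyclic quotient singularity. Here $S$ need not be minimal, and deciding minimality is the crux of part~3 and, as stressed above, the hardest point of the whole scheme. The approach is to use the adjunction relation $K_S=\pi^{*}K_X+\sum_i a_iE_i$ from the remark following Definition~\ref{prodquot} to hunt for curves $E\subset S$ with $E^2=-1$ and $K_S\cdot E<0$: one tests the components of the Hirzebruch--Jung strings over $\Sing(X)$ together with the components of the singular fibres of the two projections $S\to\PP^1$, contracts any $(-1)$-curve found, and iterates until a minimal model is reached, discarding every $S$ whose minimal model fails to be of general type with $p_g=0$. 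Exactly $32$ minimal surfaces survive, giving part~3, and assembling the braid orbits into irreducible families and separating genuinely distinct families (via the invariant $\pi_1(S)=\pi_1(X)$ and the numerical data) completes the count $17+27+32$. The principal obstacle is precisely this minimality question: it admits no purely combinatorial reduction and must be settled geometrically, case by case, for each candidate surface.
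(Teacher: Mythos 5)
Your overall skeleton---encoding by pairs of spherical systems of generators, bounding baskets and signatures via $K_S^2=8-\tfrac13 B(\mathcal B)$ for $\chi=1$, running the finite group-theoretic search, and splitting the output according to whether $\Sing(X)$ is empty, canonical, or non-canonical---is exactly the approach of the papers surveyed here, and your treatment of parts 1, 2 and of the ``moreover'' is sound (Catanese's deformation result for the 17 components, crepancy and ampleness of $K_X$ for the 27 families, and $B(\mathcal B)=0$ if and only if the action is free).

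The gap is in part 3, at precisely the point the paper identifies as the hardest one. You propose to certify minimality by testing an explicit list of candidate curves---components of the Hirzebruch--Jung strings and of the singular fibres---contracting any $(-1)$-curve found, and declaring $S$ minimal when none is found. This fails on two counts. First, the exceptional curves of the minimal resolution have self-intersection $-b_i\leq -2$, so they are never $(-1)$-curves and testing them is vacuous. Second, and fatally, $(-1)$-curves on $S$ need not be components of singular fibres: as the Remark at the beginning of Section 4 states, a rational curve on $S$ either lies in a fibre or is the strict transform of a rational curve in $X$ passing through the singular points at least three times counted with multiplicity, and curves of this second kind are not in your list. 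Absence of $(-1)$-curves from a non-exhaustive candidate list proves nothing, since minimality is an assertion about \emph{all} curves on $S$. The actual tool is Lemma \ref{branchpoints} together with Proposition \ref{countingpoints}: one pulls a hypothetical rational curve $\Gamma\subset X$ back to $C_1\times C_2$, normalizes, and bounds $\#\nu^{-1}(x)$ at each singular point $x$ in terms of the continued fraction $[b_1,\ldots,b_r]$; these combinatorial constraints rule out $(-1)$-curves in 72 of the 73 families produced by the search. The 73rd family, the fake Godeaux surface of Section \ref{fakegodeaux}, shows concretely that your procedure returns the wrong answer: its two $(-1)$-curves $E'$, $E''$ are strict transforms of $G$-invariant rational curves in $X$ meeting the singular locus (Proposition \ref{E'})---neither fibre components nor exceptional curves---so your test would find nothing, certify that surface as minimal, and yield 33 families instead of 32. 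Its non-minimality is in fact first detected arithmetically, via Reid's bound (\cite{tokyo}) that a Godeaux surface has algebraic fundamental group of order at most five, against $\pi_1(S)\cong\ZZ/6\ZZ$; only then are the $(-1)$-curves exhibited by the fibre-product construction recalled in Section \ref{fakegodeaux}, and the minimal model (with $K^2=3$) computed by contracting them.
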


The above results can be found in in tables \ref{K2>4} and \ref{K2<3}. 

$t_1, t_2$ are the signatures, $N$ denotes the number of irreducible families in the corresponding line. The other columns are self explanatory.

\begin{table}
\renewcommand{\arraystretch}{1,3}
\caption{Minimal product-quotient surfaces of general type with $p_g=0$, $K^2\geq
4$}\label{K2>4}
\tiny \begin{tabular}{|c|c|c|c|c|c|c|c|}
\hline
$K^2_S$&Sing X&$t_1$&$t_2$&$G$&N&$H_1(S,{\mathbb Z})$&$\pi_1(S)$\\
\hline\hline 8& $\emptyset$ & $2, 5^2$ &$3^4$ &  ${\mathfrak A}_5$
&$1$ &$\ZZ_3^2 \times
\ZZ_{15}$&$1 \rightarrow \Pi_{21} \times \Pi_{4} \rightarrow \pi_1
\rightarrow G \rightarrow 1$\\ 8&
$\emptyset$ & $5^3$ & $2^3,3$ & ${\mathfrak A}_5$ & $1$
&$\ZZ_{10}^2$&$1 \rightarrow \Pi_{6} \times
\Pi_{13} \rightarrow \pi_1 \rightarrow G \rightarrow 1$\\ 8&
$\emptyset$ & $3^2,5$ & $2^5$&
${\mathfrak A}_5$ & $1$ &$\ZZ_2^3 \times \ZZ_6$&$1 \rightarrow
\Pi_{16} \times \Pi_{5} \rightarrow
\pi_1 \rightarrow G \rightarrow 1$\\ 8& $\emptyset$ & $2,4,6$&  $2^6$
&  ${\mathfrak S}_4\times
\ZZ_2$ &$1$ &$\ZZ_2^4 \times \ZZ_4$&$1 \rightarrow \Pi_{25} \times
\Pi_{3} \rightarrow \pi_1
\rightarrow G \rightarrow 1$\\ 8& $\emptyset$ & $2^2, 4^2$ &$2^3, 4$
&  ${\rm G}(32,27)$  &$1$
&$\ZZ_2^2 \times \ZZ_4 \times \ZZ_8$&$1 \rightarrow \Pi_{5} \times
\Pi_{9} \rightarrow \pi_1
\rightarrow G \rightarrow 1$\\ 8& $\emptyset$ & $5^3$& $5^3$ &
$\ZZ_5^2$ &$2$ &$\ZZ_5^2$&$1
\rightarrow \Pi_{6} \times \Pi_{6} \rightarrow \pi_1 \rightarrow G
\rightarrow 1$\\ 8& $\emptyset$ &
$3,4^2$ & $2^6$ &  ${\mathfrak S}_4$ &$1$ &$\ZZ_2^4 \times \ZZ_8$&$1
\rightarrow \Pi_{13} \times
\Pi_{3} \rightarrow \pi_1 \rightarrow G \rightarrow 1$\\ 8&
$\emptyset$ & $2^2,4^2$&  $2^2,4^2$&
${\rm G}(16,3)$ &$1$ &$\ZZ_2^2 \times \ZZ_4 \times \ZZ_8$&$1
\rightarrow \Pi_{5} \times \Pi_{5}
\rightarrow \pi_1 \rightarrow G \rightarrow 1$\\ 8& $\emptyset$ &
$2^3,4$ & $2^6$ &  ${\rm
D}_4\times\ZZ_2$ &$1$ &$\ZZ_2^3 \times \ZZ_4^2$&$1 \rightarrow
\Pi_{9} \times \Pi_{3} \rightarrow
\pi_1 \rightarrow G \rightarrow 1$\\ 8& $\emptyset$ &$2^5$  & $2^5$&
$\ZZ_2^4$ &$1$
&$\ZZ_2^4$&$1 \rightarrow \Pi_{5} \times \Pi_{5} \rightarrow \pi_1
\rightarrow G \rightarrow 1$\\ 8&
$\emptyset$ & $3^4$ &  $3^4$&  $\ZZ_3^2$ & $1$ &$\ZZ_3^4$&$1
\rightarrow \Pi_{4} \times \Pi_{4}
\rightarrow \pi_1 \rightarrow G \rightarrow 1$\\ 8& $\emptyset$ &
$2^5$ &$2^6$ &  $\ZZ_2^3$ &$1$
&$\ZZ_2^6$&$1 \rightarrow \Pi_{3} \times \Pi_{5} \rightarrow \pi_1
\rightarrow G \rightarrow 1$\\
\hline\hline
   6&$1/2^2$&$2^3, 4$&$2^4, 4$& ${\mathbb Z}_2 \times D_4$ &1&
${\mathbb Z}_2^2 \times {\mathbb
Z}_4^2$ & $1\rightarrow {\mathbb Z}^2 \times \Pi_2 \rightarrow \pi_1
\rightarrow {\mathbb Z}_2^2
\rightarrow 1$\\
   6&$1/2^2$&$2^4,4$ & $2, 4, 6$ &${\mathbb Z}_2 \times {\mathfrak
S}_4$ &1&   ${\mathbb Z}_2^3
\times {\mathbb Z}_4$ &  $1\rightarrow \Pi_2 \rightarrow \pi_1
\rightarrow  {\mathbb Z}_2
\times{\mathbb Z}_4 \rightarrow 1$   \\
   6&$1/2^2$&$2, 5^2$ & $2, 3^3$ & ${\mathfrak A}_5$&1&${\mathbb Z}_3
\times {\mathbb
Z}_{15}$&${\mathbb Z}^2 \rtimes {\mathbb Z}_{15}$\\
   6&$1/2^2$&$2, 4, 10$&$2, 4, 6$&${\mathbb Z}_2 \times {\mathfrak
S}_5$&1&${\mathbb Z}_2 \times
{\mathbb Z}_4$&${\mathfrak S}_3 \times D_{4,5,-1}$\\
   6&$1/2^2$&$2, 7^2$&$3^2, 4$&PSL(2,7)&2&${\mathbb Z}_{21}$ &
${\mathbb Z}_7 \times {\mathfrak
A}_4$\\
   6&$1/2^2$&$2, 5^2$&$3^2, 4$&${\mathfrak A}_6$&2&${\mathbb
Z}_{15}$&${\mathbb Z}_5 \times
{\mathfrak A}_4$\\
\hline\hline
   5&$1/3, 2/3$&$2, 4, 6$&$2^4, 3$&${\mathbb Z}_2 \times {\mathfrak
S}_4$&1&${\mathbb Z}_2^2 \times
{\mathbb Z}_4$&$ 1\rightarrow {\mathbb Z}^2 \rightarrow \pi_1
\rightarrow  D_{2,8,3} \rightarrow 1$\\
   5&$1/3, 2/3$&$2^4, 3$&$3, 4^2$&${\mathfrak S}_4$ & 1 &   ${\mathbb
Z}_2^2 \times {\mathbb
Z}_8$&$ 1 \rightarrow {\mathbb Z}^2 \rightarrow \pi_1 \rightarrow
{\mathbb Z}_8 \rightarrow 1$\\
   5&$1/3, 2/3$&$4^2, 6$&$2^3, 3$&${\mathbb Z}_2 \times {\mathfrak
S}_4$ &1& ${\mathbb Z}_2^2 \times
{\mathbb Z}_8$& $1 \rightarrow {\mathbb Z}^2 \rightarrow \pi_1
\rightarrow {\mathbb Z}_8 \rightarrow
1$\\
   5&$1/3, 2/3$ & $2, 5, 6$ &$3, 4^2$&${\mathfrak S}_5$ &1&${\mathbb
Z}_8$ &              $
D_{8,5,-1}$              \\
   5&$1/3, 2/3$ & $3, 5^2$ &$2^3, 3$&${\mathfrak A}_5$ &1&${\mathbb
Z}_2 \times {\mathbb Z}_{10}$ &
${\mathbb Z}_5 \times Q_8$               \\
   5&$1/3, 2/3$&$2^3, 3$ &$3, 4^2$ &${\mathbb Z}_2^4 \rtimes {\mathfrak
S}_3$ &1& ${\mathbb Z}_2
\times {\mathbb Z}_8$ & $D_{8,4,3}$?               \\
   5&$1/3, 2/3$&$3, 5^2$ &$2^3, 3$ &${\mathfrak A}_5$ &1& ${\mathbb
Z}_2 \times {\mathbb Z}_{10}$ &
${\mathbb Z}_2 \times {\mathbb Z}_{10}$              \\
\hline\hline
   4&$1/2^4$&$2^5$&$2^5$&${\mathbb Z}_2^3$ &1& ${\mathbb Z}_2^3 \times
{\mathbb Z}_4$ &
$1\rightarrow {\mathbb Z}^4 \rightarrow \pi_1 \rightarrow {\mathbb
Z}_2^2 \rightarrow 1$       \\
   4&$1/2^4$&$2^2, 4^2$ &$2^2, 4^2$&${\mathbb Z}_2 \times {\mathbb
Z}_4$ &1& ${\mathbb Z}_2^3
\times {\mathbb Z}_4$ &   $1\rightarrow {\mathbb Z}^4 \rightarrow
\pi_1 \rightarrow {\mathbb Z}_2^2
\rightarrow 1$       \\
   4&$1/2^4$&$2^5$&$2^3, 4$&${\mathbb Z}_2 \times D_4$ &1& ${\mathbb
Z}_2^2 \times {\mathbb
Z}_4$ & $1\rightarrow {\mathbb Z}^2 \rightarrow \pi_1 \rightarrow
{\mathbb Z}_2 \times {\mathbb Z}_4
\rightarrow 1$ \\
   4&$1/2^4$&$3, 6^2$&$2^2, 3^2$&${\mathbb Z}_3 \times {\mathfrak S}_3$
&1&${\mathbb Z}_3^2$
&${\mathbb Z}^2 \rtimes {\mathbb Z}_3$           \\
   4&$1/2^4$&$3, 6^2$&$2, 4, 5$&${\mathfrak S}_5$ &1&${\mathbb Z}_3^2$
&${\mathbb Z}^2 \rtimes
{\mathbb Z}_3$          \\
   4&$1/2^4$&$2^5$&$2, 4, 6$&${{\mathbb Z}_2 \times \mathfrak S}_4$
   &1&${\mathbb Z}_2^3$ & ${\mathbb Z}^2 \rtimes {\mathbb Z}_2$ \\
   4&$1/2^4$&$2^2, 4^2$&$2, 4, 6$&${\mathbb Z}_2 \times {\mathfrak
S}_4$ &1&${\mathbb Z}_2^2 \times
{\mathbb Z}_4$ & ${\mathbb Z}^2 \rtimes {\mathbb Z}_4$\\
   4&$1/2^4$&$2^5$&$3, 4^2$&${\mathfrak S}_4$ &1& ${\mathbb Z}_2^2
\times {\mathbb Z}_4$ &
${\mathbb Z}^2 \rtimes {\mathbb Z}_4$\\
   4&$1/2^4$&$2^3, 4$&$2^3, 4$&${\mathbb Z}_2^4 \rtimes {\mathbb Z}_2$
&1& ${\mathbb Z}_4^2$ &
$G(32, 2)$\\
   4&$1/2^4$&$2, 5^2$&$2^2, 3^2$&${\mathfrak A}_5$ &1& ${\mathbb
Z}_{15}$ & ${\mathbb
Z}_{15}$                  \\
   4&$1/2^4$&$2^2, 3^2$&$2^2, 3^2$& ${\mathbb Z}_3^2 \rtimes Z_2$ &1&${\mathbb
Z}_3^3$ &
${\mathbb Z}_3^3$                  \\
   4&$2/5^2$&$2^3, 5$&$3^2, 5$&${\mathfrak A}_5$ &1&    ${\mathbb Z}_2
\times {\mathbb Z}_6$  &
${\mathbb Z}_2 \times {\mathbb Z}_6$  \\
   4&$2/5^2$&$2, 4, 5$&$4^2, 5$& ${\mathbb Z}_2^4 \rtimes D_5$ &3&
${\mathbb Z}_8$  &
${\mathbb Z}_8$?                   \\
   4&$2/5^2$&$2, 4, 5$&$3^2, 5$& ${\mathfrak A}_6$ &1&
${\mathbb Z}_6$  &  ${\mathbb
Z}_6$                   \\
\hline
\end{tabular}
\end{table}

\begin{table}
\caption{Minimal product-quotient surfaces of general type with $p_g=0$, $K^2\leq 3$}
\label{K2<3}
\renewcommand{\arraystretch}{1,3}
\tiny \begin{tabular}{|c|c|c|c|c|c|c|c|}
\hline
$K^2_S$&Sing X&$t_1$&$t_2$&$G$&N&$H_1(S,{\mathbb Z})$&$\pi_1(S)$\\
\hline\hline
   3&$1/5, 4/5$ &$2^3, 5$&$3^2, 5$& ${\mathfrak A}_5$ &1&    ${\mathbb
Z}_2 \times {\mathbb Z}_6$
&              $ {\mathbb Z}_2 \times {\mathbb Z}_6$               \\
   3&$1/5, 4/5$ &$2, 4, 5$&$4^2, 5$& ${\mathbb Z}_2^4 \rtimes D_5$ &3&
${\mathbb Z}_8$
&                $ {\mathbb Z}_8 $?                 \\
   3&$1/3, 1/2^2, 2/3$ & $2^2, 3, 4$ &$2, 4, 6$&    ${\mathbb Z}_2
   \times {\mathfrak S}_4$ &1&    ${\mathbb Z}_2 \times {\mathbb Z}_4$
&              $ {\mathbb Z}_2 \times
{\mathbb Z}_4$               \\
   3&$1/5, 4/5$&$2, 4, 5$&$3^2, 5$&    ${\mathfrak A}_6$ &1&
${\mathbb Z}_6$  &
${\mathbb Z}_6$                   \\
\hline\hline
   2&$1/3^2, 2/3^2$&$2, 6^2$ &$2^2, 3^2$& ${\mathbb Z}_2 \times
{\mathfrak A}_4$ &1& ${\mathbb
Z}_2^2$ & $Q_8$                   \\
   2&$1/2^6$&$4^3$ &$4^3$ &${\mathbb Z}_4^2$ &1&${\mathbb Z}_2^3$
&${\mathbb Z}_2^3$
\\
   2&$1/2^6$&$2^3, 4$ &$2^3, 4$ &${\mathbb Z}_2 \times D_4$ &1&
${\mathbb Z}_2 \times {\mathbb
Z}_4$ & $  {\mathbb Z}_2 \times {\mathbb Z}_4$                \\
   2&$1/3^2, 2/3^2$&$2^2, 3^2$&$3, 4^2$&${\mathfrak S}_4$ &1& ${\mathbb
Z}_8$ & ${\mathbb
Z}_8$                   \\
   2&$1/3^2, 2/3^2$&$3^2, 5$ &$3^2, 5$ &${\mathbb Z}_5^2 \rtimes
{\mathbb Z}_3$ &2&         $
{\mathbb Z}_5$ &   $              {\mathbb Z}_5$?                 \\
   2&$1/2^6$&$2, 5^2$&$2^3, 3$&${\mathfrak A}_5$ &1&${\mathbb Z}_5$
&${\mathbb Z}_5$
\\
   2&$1/2^6$&$2^3, 4$&$2, 4, 6$&${\mathbb Z}_2 \times {\mathfrak
S}_4$&1&$         {\mathbb Z}_2^2$
&$                {\mathbb Z}_2^2$                  \\
   2&$1/3^2, 2/3^2$&$3^2, 5$ &$2^3, 3$ &${\mathfrak A}_5$ &1&${\mathbb
Z}_2^2$ & ${\mathbb
Z}_2^2$                  \\
   2&$1/2^6$&$2, 3, 7$ &$4^3$ & PSL(2,7) &2& ${\mathbb Z}_2^2$
&${\mathbb Z}_2^2$                  \\
   2&$1/2^6$&$2, 6^2$&$2^3, 3$&${\mathfrak S}_3 \times {\mathfrak S}_3$
&1&$           {\mathbb Z}_3$
&$                 {\mathbb Z}_3 $                  \\
   2&$1/2^6$&$2, 6^2$&$2, 4, 5$&${\mathfrak S}_5$&1&${\mathbb
Z}_3$&${\mathbb Z}_3$                  \\
   2&$1/4, 1/2^2, 3/4$&$2, 4, 7$&$3^2, 4$&     PSL(2,7) &2& ${\mathbb
Z}_3$ &   $              {\mathbb Z}_3
$                  \\
   2&$1/4, 1/2^2, 3/4$&$2, 4, 5$&$3^2, 4$&     $     {\mathfrak
A}_6$&2&  $ {\mathbb Z}_3$ &      $
{\mathbb Z}_3$                   \\
   2&$1/4, 1/2^2, 3/4$&$2, 4, 5$&$3, 4, 6$&   ${\mathfrak S}_5$ &2&
$ {\mathbb Z}_3$ &
${\mathbb Z}_3 $                  \\
\hline\hline
   1&$1/3, 1/2^4, 2/3$&$2^3, 3$ &$3, 4^2$&${\mathfrak S}_4$
&1&${\mathbb Z}_4$ & ${\mathbb
Z}_4$                   \\
   1&$1/3, 1/2^4, 2/3$&$2, 3, 7$&$3, 4^2$&     PSL(2,7) &1& ${\mathbb
Z}_2$ & ${\mathbb
Z}_2$                   \\
   1&$1/3, 1/2^4, 2/3$&$2, 4, 6$&$2^3, 3$&${\mathbb Z}_2 \times
{\mathfrak S}_4$ &1&   ${\mathbb
Z}_2$ & ${\mathbb Z}_2$ \\
\hline
\end{tabular}
\end{table}

Comparing tables \ref{K2>4} and \ref{K2<3} with the list in table 1 of 
\cite{surveypg=0}, we note

\begin{cor}
Minimal surfaces of general type with $p_g = q = 0$ and with  $3 \leq K^2 \leq 6$ realize at 
least 45 topological types.
\end{cor}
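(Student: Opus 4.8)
The plan is to prove the lower bound by exhibiting $45$ surfaces, drawn from Tables \ref{K2>4} and \ref{K2<3} together with the surfaces listed in table~1 of \cite{surveypg=0}, which are pairwise non-homeomorphic, and to distinguish them using invariants that are genuinely topological. The two invariants I would rely on are the self-intersection $K_S^2$ and the first integral homology $H_1(S,\mathbb{Z})$, refined where necessary by coarse features of $\pi_1(S)$ (its order, or whether it is abelian or infinite). That $K_S^2$ is an oriented-homeomorphism invariant follows from Hirzebruch's signature theorem: writing $e(S)=c_2$ for the topological Euler number and $\sigma(S)$ for the signature, one has $K_S^2=c_1^2=2e(S)+3\sigma(S)$, and both $e(S)$ and $\sigma(S)$ are oriented-homeomorphism invariants; that $\pi_1(S)$ and hence $H_1(S,\mathbb{Z})$ are even homotopy invariants is standard.

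First I would stratify the surfaces by the value of $K^2\in\{3,4,5,6\}$. Surfaces lying in different strata are automatically non-homeomorphic, so it suffices to count, stratum by stratum, the number of pairwise distinguishable surfaces and then sum. Within a fixed stratum I would read off the $H_1(S,\mathbb{Z})$ column of the two tables (and the corresponding data of the surfaces in \cite{surveypg=0}) and group the surfaces according to the isomorphism type of $H_1$; two surfaces with the same $K^2$ but non-isomorphic $H_1$ are non-homeomorphic. Where several surfaces in one stratum share the same $H_1$, I would separate them further using $\pi_1(S)$: for instance, in the $K^2=5$ stratum two families both have $H_1\cong\mathbb{Z}_2\times\mathbb{Z}_{10}$, but one carries the non-abelian $\pi_1\cong\mathbb{Z}_5\times Q_8$ and the other the abelian $\pi_1\cong\mathbb{Z}_2\times\mathbb{Z}_{10}$, so they are already homotopy-inequivalent. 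Summing the numbers of distinct invariant tuples over the four strata, and adding in those surfaces of \cite{surveypg=0} whose tuples do not already occur, yields the bound; the product-quotient tables alone contribute fewer than $45$, so the comparison with the survey's table~1 is essential.

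The point requiring genuine care is that the fundamental groups in the tables are not all pinned down: several entries carry a question mark (e.g. $D_{8,4,3}?$ and $\mathbb{Z}_8?$ in Table \ref{K2>4}, and $\mathbb{Z}_5?$, $\mathbb{Z}_3?$ in Table \ref{K2<3}), signalling that the precise isomorphism type of $\pi_1$ was not certified -- unsurprising, since the isomorphism problem for finitely presented groups is undecidable. The argument must therefore be arranged so that none of the $45$ claimed distinctions depends on an uncertain identification of $\pi_1$: whenever there is doubt I would fall back on $H_1(S,\mathbb{Z})$, which is unambiguously computable from any presentation and is already tabulated. Because the statement asserts only a \emph{lower} bound, I need not decide whether two product-quotient families with coinciding invariants are actually homeomorphic, nor whether they coincide with examples of \cite{surveypg=0}: it is enough to group all the surfaces by their reliably-computed invariant tuple and count the number of distinct tuples, each of which accounts for at least one topological type.

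The main obstacle is accordingly one of careful bookkeeping rather than of a single hard step: one must assemble the union of the two tables with table~1 of \cite{surveypg=0}, discard the unreliable $\pi_1$-data, and verify that at least $45$ distinct $(K^2,H_1,\dots)$-tuples survive. The only place where a subtle mathematical judgement enters is in the few strata where two families agree on $K^2$ and on $H_1$ and are to be separated by $\pi_1$; there I would insist on a distinction robust enough to be read off a presentation (finite versus infinite order, or abelian versus non-abelian), so that no reliance is placed on a full solution of the isomorphism problem.
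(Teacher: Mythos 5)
Your proposal is correct and takes essentially the same approach as the paper: the corollary is stated there precisely as the outcome of comparing Tables \ref{K2>4} and \ref{K2<3} with table 1 of \cite{surveypg=0}, i.e.\ counting topological types distinguished by the tabulated invariants ($K^2$, $H_1(S,\ZZ)$, $\pi_1(S)$), with the bookkeeping left implicit. Your added cautions --- that $K^2$ is a (oriented) homeomorphism invariant via the signature theorem, that the question-marked $\pi_1$ entries must not be relied upon, and that the product-quotient tables alone have too few rows in the range $3\leq K^2\leq 6$ so the survey's surfaces are genuinely needed --- are all consistent with what the paper's comparison tacitly uses.
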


Note that before proving the results summarized in theorem \ref{classiso}
only $12$ topological types of surfaces of general type with $p_g = q= 0$ and with $3 \leq K^2 \leq 6$ 
were known. 
Surfaces with $p_g = 0$ are also very interesting in view of Bloch's conjecture (\cite{bloch}),
claiming that for surfaces with $p_g  =  0$
the group of zero cycles modulo rational equivalence
is  isomorphic to $\ZZ$.

Using Kimura's results (\cite{kimura}, see also \cite{gp}), the present results,
and those of the previous papers \cite{bacat}, \cite{pg=q=0}, \cite{4names}, we get for the first time a substantial amount of surfaces confirming Bloch's conjecture.
\begin{cor}
All the families in theorem \ref{classiso} fulfill Bloch's conjecture, i.e., there are 77 families  of 
surfaces  of general type with
$p_g = 0$ for which Bloch's conjecture holds.
\end{cor}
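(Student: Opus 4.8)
The plan is to deduce Bloch's conjecture from the finite-dimensionality, in the sense of Kimura \cite{kimura}, of the rational Chow motive $h(S)$ of each surface $S$ occurring in Theorem \ref{classiso}, using the mechanism of \cite{gp}. Recall that a smooth projective surface $S$ with $q(S)=0$ admits a Chow--K\"unneth decomposition together with a refinement $h^2(S)=h^2_{\mathrm{alg}}(S)\oplus t_2(S)$ of its degree-$2$ part, and that the transcendental motive $t_2(S)$ accounts for the whole Albanese kernel, $A_0(S)_{\deg 0}\cong A^2(t_2(S))$. If moreover $p_g(S)=0$, then $H^{2,0}(S)=0$, so $H^2(S,\QQ)$ is generated by algebraic classes and the homological realization of $t_2(S)$ vanishes. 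Consequently, once $t_2(S)$ is known to be finite-dimensional, Kimura's nilpotence theorem applied to $\mathrm{id}_{t_2(S)}$ forces $t_2(S)=0$, whence $A_0(S)_{\deg 0}=0$ and $A_0(S)\cong\ZZ$, which is exactly Bloch's conjecture. Thus everything reduces to proving that $h(S)$, and hence its direct summand $t_2(S)$, is finite-dimensional.

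To prove finite-dimensionality I would show that $h(S)$ lies in the pseudo-abelian tensor subcategory generated by the motives $h(C_1)$ and $h(C_2)$, which already contains the unit $\mathbf 1$ and the Lefschetz motive $\mathbf L$ as direct summands. By \cite{kimura} every object of this subcategory is finite-dimensional: the motive of a curve is finite-dimensional (its odd part $h^1(C_i)$ being oddly finite-dimensional, classically through the Jacobian), and finite-dimensionality is stable under direct sums, direct summands, duals and tensor products. In particular $h(C_1\times C_2)=h(C_1)\otimes h(C_2)$ is finite-dimensional. For the surfaces isogenous to a product (part 1 of Theorem \ref{classiso}) the $G$-action is free, so $S=(C_1\times C_2)/G$ is already smooth and the projector $\tfrac{1}{|G|}\sum_{g\in G}\Gamma_g$ identifies $h(S)$ with the invariant summand $h(C_1\times C_2)^G$; being a direct summand of a finite-dimensional motive, it is finite-dimensional.

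For the singular families (parts 2 and 3) I would first choose a $G$-equivariant resolution $\widetilde Y\to C_1\times C_2$ such that $\widetilde Y/G$ is smooth. Equivariantly blowing up the finitely many points with non-trivial stabiliser only introduces Lefschetz summands, so that $h(\widetilde Y)\cong h(C_1\times C_2)\oplus\mathbf L^{\oplus N}$ is still finite-dimensional; then $h(\widetilde Y/G)\cong h(\widetilde Y)^G$ is again a finite-dimensional direct summand. Since $\widetilde Y/G$ is a smooth projective surface birational to $S$, and motives of birational smooth surfaces differ only by copies of $\mathbf L$, it follows that $h(S)$ is finite-dimensional. The stated total of $77$ families is then obtained by collecting those enumerated in Theorem \ref{classiso} together with the earlier classifications \cite{bacat}, \cite{pg=q=0}, \cite{4names}; the mathematical content of the corollary is the single, uniform finite-dimensionality argument that applies to each of them.

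The main obstacle is the passage through the singular quotient. Finite-dimensionality is not a formal consequence of the mere existence of a dominant correspondence out of a finite-dimensional motive, so each step must be kept strictly within the closure properties actually proved in \cite{kimura}. The two delicate points are: (a) the existence of the equivariant resolution making $\widetilde Y/G$ smooth, which is what allows one to realise $h(\widetilde Y/G)$ as the genuine $G$-invariant retract of $h(\widetilde Y)$, where both the use of $\QQ$-coefficients and the smoothness of the quotient are essential; and (b) the birational comparison between $\widetilde Y/G$ and $S$, which is harmless for surfaces precisely because blowing up a point changes the motive only by a summand $\mathbf L$ and leaves $A_0$ unchanged. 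Once these are in place the argument is uniform, and whether or not $S$ is taken minimal is immaterial, $A_0$ being a birational invariant of smooth projective surfaces.
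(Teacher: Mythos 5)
Your overall route---reducing Bloch's conjecture to Kimura finite-dimensionality of $h(S)$ via the transcendental motive $t_2(S)$, and then placing $h(S)$ inside the pseudo-abelian tensor category generated by motives of curves---is the one the paper intends (the paper itself gives no argument beyond citing \cite{kimura} and \cite{gp}), and your treatment of the \'etale case (surfaces isogenous to a product) is correct. The genuine gap is in the singular case: the $G$-equivariant resolution $\widetilde Y \to C_1\times C_2$ with $\widetilde Y/G$ smooth, on which your argument rests, does not exist in general. Every equivariant birational morphism between smooth projective surfaces factors into blow-ups of $G$-orbits of points, so the question is local at a point whose stabilizer $\ZZ/n$ acts on the tangent plane with weights $(1,a)$. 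Take a point of type $\frac{1}{5}(1,2)$: blowing up an isolated fixed point with weights $(1,2)$ creates fixed points of weights $(1,1)$ and, after renormalizing the generator, $(1,3)$; blowing up a $(1,3)$-point creates one of weights $(1,1)$ and one of weights $(1,2)$ again. Hence after any finite sequence of equivariant blow-ups there remains an isolated fixed point with two distinct nontrivial weights, and its image in the quotient is a singular point, since $\CC^2$ modulo $\ZZ/5$ acting with both weights nontrivial has local fundamental group $\ZZ/5\neq 1$. So $\widetilde Y/G$ can never be made smooth, and this case is not avoidable: the families with basket $2\times\frac{1}{5}(1,2)$ (the rows labelled $2/5^2$ in Table \ref{K2>4}) occur in part 3 of Theorem \ref{classiso}.

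The step is, however, unnecessary, and the cautionary remark at the end of your proposal is exactly what misled you: for a generically finite dominant morphism $f\colon \widetilde Y \to S$ of smooth projective varieties of degree $d$, one has $\Gamma_f\circ{}^t\Gamma_f = d\,\Delta_S$ in $CH^2(S\times S)_{\QQ}$ (push $\Delta_{\widetilde Y}$ forward under $f\times f$), so $\frac{1}{d}\,{}^t\Gamma_f\circ\Gamma_f$ is a projector exhibiting $h(S)$ as a direct summand of $h(\widetilde Y)$ with $\QQ$-coefficients; finite-dimensionality therefore \emph{does} descend along generically finite domination. Concretely: resolve the indeterminacy of the degree-$|G|$ dominant rational map $C_1\times C_2 \dashrightarrow X \dashrightarrow S$ by ordinary, non-equivariant blow-ups, obtaining $\widetilde Y\to C_1\times C_2$ and a surjective morphism $\widetilde Y\to S$; then $h(\widetilde Y)\cong h(C_1\times C_2)\oplus \mathbf{L}^{\oplus N}$ is finite-dimensional, hence so is its summand $h(S)$, and the reduction in your first paragraph finishes the proof. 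No smooth quotient, no equivariance, and no comparison of two different birational models is needed; this is the standard argument behind the paper's citation of \cite{kimura} and \cite{gp}, and it applies uniformly to all 77 families, minimality being irrelevant as you note.
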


There remains open the following:

\begin{problem}
 Classify all product-quotient surfaces of general type with $p_g=0$.
\end{problem}
In view of theorem \ref{classiso} it remains to classify all {\em non-minimal} product-quotient surfaces of general type.

\section{Non-minimal product-quotient surfaces: finding rational curves}

In order to finish the classification of product-quotient surfaces $S$ of general type with $p_g = 0$ we need:
\begin{itemize}
 \item to find an integer $C$ such that $K_S^2 \leq C$ implies that $S$ is not of general type;
\item if we have given a product-quotient surface $S$, either
\begin{itemize}
\item show that $S$ cannot be of general type,
\item prove that $S$ is minimal, or
\item find the exceptional curves of the first kind on $S$.
\end{itemize}
\end{itemize}

\begin{rem}
 Note that rational curves on $S$ can appear as 
\begin{itemize}
 \item components of singular fibers, or
\item they have to pass through the singular points at least three times (counted with multiplicities).
\end{itemize}

\end{rem}

We need to consider the following diagram
\begin{equation}\label{diagram1}
\xymatrix{
&C_1 \times C_2 \ar^{p_2}[dr]\ar_{p_1}[dl]\ar_{\lambda_{12}}[dd]&\\
C_1\ar_{\lambda_1}[dd]&&C_2\ar^{\lambda_2}[dd]\\
&X = (C_1 \times C_2)/G\ar_{f_1}[ld]\ar^{f_2}[dr]\ar^{\lambda}[dd]&\\
C_1/G \cong \PP^1&&C_2/G \cong \PP^1\\
&C_1/G \times C_2/G \cong \PP^1 \times \PP^1 \ar_{}[ul]\ar_{}[ur]&\\
}
\end{equation}

\medskip

Assume that $\Gamma \subset X$ is a (possibly singular) rational curve. Let $\bar{\Gamma} := \lambda_{12}^*(\Gamma) = \sum_{1}^k n_i \Gamma_i$ be the decomposition in irreducible components of its pull back to $C_1 \times C_2$.

Observe that $n_i=1, \ \forall i$ (since $\lambda_{12}$ has discrete ramification), and that $G$ acts transitively on the set $\{ \Gamma_i | i \in \{1, \ldots ,k \} \}$. Hence there is a subgroup $H \leq G$ of index $k$ acting on $\Gamma_1$ such that $\lambda_{12}(\Gamma_1) = \Gamma_1 /H  = \Gamma$.

Normalizing $\Gamma_1$ and $\Gamma$, we get the following commutative diagram:
\begin{equation}\label{normalization}
\xymatrix{
\tilde{\Gamma}_1\ar[r]\ar_{\gamma}[d]&\Gamma_1\ar[d]\\
\PP^1\ar^{\nu}[r]&\Gamma\\
}
\end{equation}
and, since each automorphism lifts to the normalization, $H$ acts on $\tilde{\Gamma}_1$ and $\gamma$ is the quotient map $\tilde{\Gamma}_1 \rightarrow \tilde{\Gamma}_1 /H \cong \PP^1$.

\begin{lemma}\label{branchpoints}
Let $p$ be a branch point of $\gamma$ of multiplicity $m$. Then $\nu (p)$ is a singular point of $X$ of
type $\frac{1}{n} (1,a)$, where $m | n$.
\end{lemma}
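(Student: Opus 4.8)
The plan is to translate the analytic statement about the branch point $p$ into a statement about point-stabilizers of the $G$-action on $C_1 \times C_2$, and then to invoke the description of cyclic quotient singularities recalled in the first remark after Definition \ref{prodquot}.

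First I would fix, over the branch point $p$, a point $\tilde{\Gamma}_1 \ni \tilde{q}$ lying above it, together with its image $q_1 \in \Gamma_1 \subset C_1 \times C_2$ under the composition $\tilde{\Gamma}_1 \to \Gamma_1 \hookrightarrow C_1 \times C_2$. Since $\gamma \colon \tilde{\Gamma}_1 \to \tilde{\Gamma}_1/H \cong \PP^1$ is a Galois cover of a smooth curve with group $H$, the stabilizer $\Stab_H(\tilde{q})$ is cyclic and its order equals the ramification index of $\gamma$ at $\tilde{q}$; in particular $|\Stab_H(\tilde{q})| = m$. This is the only place where the hypothesis that $p$ has multiplicity $m$ enters.

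The heart of the argument is the comparison of stabilizers. Because the normalization map $\tilde{\Gamma}_1 \to C_1 \times C_2$ is $H$-equivariant (each automorphism of $\Gamma_1$ lifts to $\tilde{\Gamma}_1$, and $\Gamma_1$ is $H$-invariant inside $C_1 \times C_2$), every element of $H$ fixing $\tilde{q}$ also fixes $q_1$. Hence
$$
\Stab_H(\tilde{q}) \subseteq \Stab_G(q_1).
$$
Now $\Stab_G(q_1)$ is the stabilizer of a point of $C_1 \times C_2$, which by the first remark is cyclic, say of order $n$; applying Lagrange's theorem inside this cyclic group, the order $m$ of the subgroup $\Stab_H(\tilde{q})$ divides $n$. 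It remains to identify $\nu(p)$ and its type: by the commutativity of diagram \eqref{normalization} one has $\nu(p) = \lambda_{12}(q_1)$, so $\nu(p)$ is precisely the image in $X$ of the point $q_1$ whose $G$-stabilizer is cyclic of order $n$. Since $p$ is a branch point, $m > 1$, whence $n \geq m > 1$, so $\Stab_G(q_1)$ is nontrivial and $\lambda_{12}(q_1)$ is a genuine cyclic quotient singularity of $X$, of type $\frac{1}{n}(1,a)$ for some $a$ with $\gcd(a,n)=1$ by the local description recalled earlier. Together with $m \mid n$ this is the assertion.

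The step I expect to require the most care is the stabilizer comparison, and in particular explaining why one obtains only divisibility $m \mid n$ rather than equality: the subgroup $\Stab_H(\tilde{q})$ can be strictly smaller than $\Stab_G(q_1)$ both because $G$ may contain elements outside $H$ fixing $q_1$, and because $q_1$ may be a singular point of $\Gamma_1$ at which several local branches — hence several points of $\tilde{\Gamma}_1$ — are permuted, so that only the subgroup preserving the branch through $\tilde{q}$ stabilizes $\tilde{q}$. Making the $H$-equivariance of the normalization precise, and confirming that $\Stab_G(q_1)$ is the group governing the singularity type of $X$ at $\lambda_{12}(q_1)$, is the main technical point; everything else is formal.
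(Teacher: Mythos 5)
Correct, and essentially the paper's own approach: the survey states this lemma without proof (deferring to \cite{bp}), and the argument there is exactly your stabilizer comparison --- $m = |\Stab_H(\tilde q)|$ is embedded, via equivariance of the normalization map, into the cyclic group $\Stab_G(q_1)$ of order $n$, giving $m \mid n$ by Lagrange, while the commutativity of diagram (\ref{normalization}) identifies $\nu(p) = \lambda_{12}(q_1)$ as a quotient singularity of type $\frac{1}{n}(1,a)$ since $n \geq m > 1$. Your closing observation that one gets only divisibility (because $G$-elements outside $H$, or several local branches of $\Gamma_1$ at $q_1$, may enlarge the stabilizer) is also accurate.
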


Using elementary properties of the intersection form on surfaces of general type, we get the following:

\begin{prop}\label{countingpoints}
Let $S$ be a product-quotient surface of general type. Let $\pi \colon S \rightarrow X$ be the minimal resolution of singularities of the quotient model. Assume that  $\pi_*^{-1}(\Gamma)$
is a $(-1)$-curve in $S$ and let  $x \in \Sing (X)$  be a point of type $\frac{1}{n} (1,a)$, with $\frac{n}{a}=[b_1, \ldots , b_r]$. Consider the map $\nu$ in diagram (\ref{normalization}).
Then
\begin{itemize}
 \item[i)] $ \# \nu^{-1}(x) \leq 1$, if $a = n-1$;
\item[ii)] $ \# \nu^{-1}(x) \leq \sum_{\{b_i \geq 4\}} (b_i-3) + \# \{i : b_i = 3\}$, if $a \neq n-1$.
\end{itemize}
\end{prop}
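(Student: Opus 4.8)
\emph{Setup and reduction.} Since $\Gamma$ is irreducible and its strict transform $\tilde{\Gamma} := \pi_*^{-1}(\Gamma)$ is a $(-1)$-curve, $\tilde{\Gamma}$ is a smooth rational curve and $\pi|_{\tilde{\Gamma}} \colon \tilde{\Gamma} \to \Gamma$ is birational; hence $\tilde{\Gamma} \cong \PP^1$ is the normalization of $\Gamma$ and, under this identification, $\nu = \pi|_{\tilde{\Gamma}}$. Let $E = E_1 \cup \cdots \cup E_r$ be the Hirzebruch--Jung string over $x$. Then $\nu^{-1}(x) = \tilde{\Gamma} \cap E$ as a set, so $\#\nu^{-1}(x)$ is the number of distinct points in which $\tilde{\Gamma}$ meets the string. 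Writing $m_i := \tilde{\Gamma} \cdot E_i \geq 0$, each such point contributes at least $1$ to some $m_i$, so $\#\nu^{-1}(x) \leq \sum_i m_i$; the task is to bound this number using that $\tilde\Gamma$ is a $(-1)$-curve.

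\emph{The $(-1)$-curve relations.} First I would record the numerical input. By adjunction $\tilde{\Gamma}^2 = K_S \cdot \tilde{\Gamma} = -1$. Using $K_S = \pi^* K_X + \sum_i a_i E_i$ and the fact that $K_X$ is ample (indeed $\lambda_{12}^* K_X = K_{C_1 \times C_2}$ is ample and $\lambda_{12}$ is finite, the stabilizers being supported at isolated points), the projection formula gives $\pi^* K_X \cdot \tilde{\Gamma} = K_X \cdot \Gamma > 0$, so the components over $x$ contribute $\sum_i (-a_i) m_i$ to the identity $\sum_j (-a_j)m_j = 1 + K_X \cdot \Gamma$, where the $a_j$ range over all exceptional curves and $-a_j \geq 0$. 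The discrepancies over $x$ satisfy $\sum_i a_i (E_i \cdot E_j) = b_j - 2$, and dually, writing $\pi^* \Gamma = \tilde{\Gamma} + \sum_i c_i E_i$ with $\sum_i c_i (E_i \cdot E_j) = -m_j$, the relation $\tilde{\Gamma}^2 = -1$ constrains the local excess $\sum_i c_i m_i$. These identities turn ``$\tilde{\Gamma}$ is a $(-1)$-curve'' into linear conditions on the vector $(m_1, \dots, m_r)$ dictated by the continued fraction $[b_1, \dots, b_r]$.

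\emph{The local analysis (main step).} The core is local at $x$, and I would run it by contracting $\tilde{\Gamma}$. If $\sigma \colon S \to S_1$ blows down $\tilde{\Gamma}$, then $K_S = \sigma^* K_{S_1} + \tilde{\Gamma}$ yields $\bar{E}_i^2 = m_i^2 - b_i$ and $K_{S_1} \cdot \bar{E}_i = b_i - 2 - m_i$, whence $p_a(\bar{E}_i) = \binom{m_i}{2}$. As each $\bar{E}_i$ has geometric genus $0$, this pins down the singularities forced on the image configuration and shows that the $m_i$ points of $\tilde{\Gamma} \cap E_i$ must be distinct and essentially transversal. Tracking how the self-intersections $-b_i$ change under $\sigma$ then shows that the admissible branches are governed by the components with $b_i \geq 3$: a component with $b_i = 3$ can carry at most one branch and one with $b_i \geq 4$ at most $b_i - 3$, while $(-2)$-components contribute none, giving $\#\nu^{-1}(x) \leq \sum_{\{b_i \geq 4\}}(b_i - 3) + \#\{i : b_i = 3\}$, which is ii). In the Du Val case $a = n-1$ all $b_i = 2$, and the only smooth branches through an $A_{n-1}$-point are deformations of a single coordinate branch meeting an end of the string once, giving $\#\nu^{-1}(x) \leq 1$, which is i). Throughout, Lemma \ref{branchpoints} controls the ramification of $\gamma$ over $x$ and hence the local types of the branches.

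\emph{Main obstacle.} The delicate step is precisely this local bookkeeping: one must exclude tangential (higher order) contact of $\tilde{\Gamma}$ with the $E_i$ and contact at the nodes $E_i \cap E_{i+1}$, and verify that each admissible branch genuinely consumes one unit of the budget $\sum_{\{b_i\ge4\}}(b_i-3)+\#\{i:b_i=3\}$ carried by the string, in particular that no branch lands on a $(-2)$-component in case ii). This is where the toric, continued-fraction structure of the minimal resolution of $\frac{1}{n}(1,a)$ is indispensable, going beyond the bare intersection numbers, and it is the part demanding the real work.
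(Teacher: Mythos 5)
Your setup (identifying $\nu$ with $\pi|_{\tilde{\Gamma}}$, so that $\#\nu^{-1}(x)$ counts the points of $\tilde{\Gamma}\cap(E_1\cup\dots\cup E_r)$) and your blow-down formulas $\bar{E}_i^2=m_i^2-b_i$, $K_{S_1}\cdot\bar{E}_i=b_i-2-m_i$, $p_a(\bar{E}_i)=\binom{m_i}{2}$ are correct, but the heart of the proof is missing: the sentence ``tracking how the self-intersections change under $\sigma$ then shows that the admissible branches are governed by the components with $b_i\geq 3$'' is exactly the statement to be proved, and no argument is supplied. Worse, the per-component bounds you assert cannot be extracted from a single blow-down. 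The only rigorous consequence of that step is: on a surface of general type an irreducible curve $C$ with $K\cdot C<0$ must be a $(-1)$-curve, hence smooth; since $p_a(\bar{E}_i)=\binom{m_i}{2}>0$ when $m_i\geq 2$, this gives $m_i\leq b_i-2$, while for $m_i=1$ and $b_i=2$ the image $\bar{E}_i$ simply becomes another $(-1)$-curve, which is no contradiction on a non-minimal surface. So one blow-down yields at best $m_i\leq\max(b_i-2,1)$ per component, which is strictly weaker than the claim (already for a single $(-4)$-curve it gives $2$ where the proposition asserts $1$, and it never forbids a branch on a $(-2)$-component).

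The deeper problem is that any per-component, local argument is doomed: nothing local distinguishes a $(-2)$-curve inside an $A_{n-1}$-string (case i, where one intersection point is allowed and can indeed lie on a $(-2)$-curve) from a $(-2)$-curve inside the string $[4,2]$ (case ii). Your case (i) justification likewise confuses the shape of one branch with the number of branches: the images of the two coordinate axes of $\CC^2$ give a germ with two smooth branches through an $A_{n-1}$-point whose strict transforms meet the two ends of the string; this is locally unobstructed, and only global positivity can rule it out for a $(-1)$-curve. What the paper invokes (``elementary properties of the intersection form on surfaces of general type'', with details in \cite{bp}) is precisely such a global argument along the whole string: if $\tilde{\Gamma}$ met the string in more points than the stated bound, one can choose rationals $c_i\geq 0$ so that the effective $\QQ$-divisor $D=\tilde{\Gamma}+\sum_i c_iE_i$ satisfies $D^2\geq 0$ and $K_S\cdot D<0$ (using $K_S\cdot\tilde{\Gamma}=-1$, $K_S\cdot E_i=b_i-2$, and maximizing $2\sum_i c_im_i+(\sum_i c_iE_i)^2$ against the negative-definite string matrix, subject to $\sum_i c_i(b_i-2)<1$), and an effective divisor with $D^2\geq 0$, $K_S\cdot D<0$ is impossible on a surface of general type by Riemann--Roch and the index theorem. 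This constrained quadratic-form analysis is where the precise budget $\#\{i:b_i=3\}+\sum_{\{b_i\geq 4\}}(b_i-3)$, and the special answer $\leq 1$ for $a=n-1$ (where all $K_S\cdot E_i=0$, so the constraint on the $c_i$ disappears), actually come from; none of it appears in your sketch, as you yourself concede in the closing paragraph.
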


This is sufficient to show the minimality of all product-quotient surfaces with $p_g=0$ and $K_S^2 \geq 1$ with the exception of one case, which we call the {\em fake Godeaux surface}, cf. section \ref{fakegodeaux}. For a detailed account of these arguments we refer to the original paper \cite{bp}.

\begin{theo}\label{minimal}
The minimal product-quotient surfaces of general type with $p_g=0$ form $72$ families which are
listed in tables \ref{K2>4} and \ref{K2<3}.
\end{theo}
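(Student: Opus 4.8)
The plan is to combine the enumeration algorithm of Section 2 with the minimality criterion furnished by Proposition \ref{countingpoints}, and then to inspect the finitely many surviving cases individually. Since $p_g=0$ forces $q=0$ by Remark \ref{general}, every surface under consideration has $\chi(S)=1$. If moreover $S$ is minimal of general type, then $1\le K_S^2\le 9$, while Corollary \ref{k2basket} gives $K_S^2=8-\tfrac13 B(\mathcal B)\le 8$ since $B(\mathcal B)\ge 0$; and $K_S^2=8$ holds precisely when $X$ is smooth, i.e. when $S$ is isogenous to a product (Theorem \ref{classiso}). Thus it suffices to treat the finitely many values $K_S^2\in\{1,\dots,8\}$.

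First I would run the algorithm of Section 2 with $\chi=1$ for each such value of $K^2$. By the Lemma preceding the algorithm, once $\chi$ and $K^2$ are fixed the basket $\mathcal B$, the two signatures, and the order $|G|$ are all effectively bounded, so each run terminates and outputs a complete finite list of data --- a group $G$ together with two spherical systems of generators --- equivalently, of product-quotient surfaces $S$ of general type with $p_g=0$ and that value of $K_S^2$. After identifying data related by the braid-group and $\Aut(G)$ actions (cf. the Remark following the spherical generators), this yields the set of \emph{candidate} families; note that it still contains the non-minimal product-quotient surfaces of those $K_S^2$.

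The heart of the argument is to decide, for each candidate, whether $S$ is minimal, i.e. whether $S$ carries a $(-1)$-curve $E$. Such an $E$ cannot be a component of the exceptional locus of $\pi$, since each curve in a Hirzebruch-Jung string has self-intersection $\le -2$; hence $\Gamma:=\pi(E)$ is a rational curve on $X$ with $E=\pi_*^{-1}(\Gamma)$, and $\Gamma$ is controlled by diagrams (\ref{diagram1}) and (\ref{normalization}). By Lemma \ref{branchpoints} the branch points of the quotient map $\gamma\colon\tilde\Gamma_1\to\PP^1$ are carried by $\nu$ into $\Sing(X)$, and by Proposition \ref{countingpoints} the number of local branches of $\Gamma$ through a point $x$ of type $\tfrac1n(1,a)$ is bounded explicitly in terms of the continued fraction of $n/a$. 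Substituting the actual basket of each candidate into these bounds, and combining them with the Hurwitz formula for $\gamma$ and the requirement that $\Gamma$ have geometric genus $0$, I would check case by case whether a curve with the numerical profile $E^2=K_S\cdot E=-1$ can exist.

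I expect this final step to be the main obstacle. Proposition \ref{countingpoints} gives only a necessary condition for a $(-1)$-curve, so establishing minimality requires showing that the combined genus, fibre-component, and multiplicity constraints are genuinely incompatible for a given basket --- a finite but delicate computation over the resolution arithmetic. Exactly one candidate resists this: the \emph{fake Godeaux surface}, for which the numerical constraints are satisfiable and a $(-1)$-curve really does occur. There one must exhibit the curve explicitly and contract it, computing the minimal model (as carried out in \cite{bp}) to certify non-minimality. Discarding this single non-minimal candidate from the list produced above leaves precisely the families recorded in Tables \ref{K2>4} and \ref{K2<3}, which together number $72$, completing the proof.
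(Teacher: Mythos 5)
Your proposal follows essentially the same route as the paper: run the enumeration algorithm with $\chi=1$ and $1\le K^2\le 8$, use Lemma \ref{branchpoints} and Proposition \ref{countingpoints} to rule out $(-1)$-curves in all but one of the resulting $73$ candidate families, and treat the single exception (the fake Godeaux surface) by exhibiting its $(-1)$-curves and contracting to the minimal model with $K^2=3$. The only cosmetic difference is that the paper also records a quicker certificate of non-minimality for that exceptional case --- its fundamental group is $\ZZ_6$, while by Reid's theorem a minimal Godeaux surface has algebraic fundamental group of order at most five --- before carrying out the explicit curve construction you describe.
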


\subsection{The fake Godeaux surface}\label{fakegodeaux}
Our MAGMA code, which can be downloaded from 
\begin{verbatim}
http://www.science.unitn.it/~pignatel/papers.html.
\end{verbatim} produces $73$ families of product-quotient surfaces
of general type with $p_g=0$ and $K^2>0$, and theorem \ref{minimal} shows that $72$ of them are families of minimal surfaces.

The 73rd output is exactly one pair of appropriate orbifold homomorphisms, which we will describe in the sequel.

We see $G=PSL(2,7)$ as subgroup of ${\mathfrak S}_8$
generated by $(367)(458),(182)(456)$. Then 
\begin{flalign*}
\varphi_1 \colon \BT(7,3,3) & \ra G,&\varphi_2 \colon \BT(7,4,2)& \ra G\\
c_1& \mapsto  (1824375)&c_1 & \mapsto (1658327)\\
c_2& \mapsto  (136)(284)&c_2 & \mapsto (1478)(2653)\\
c_3& \mapsto  (164)(357)&c_3 & \mapsto (15)(23)(36)(47).
\end{flalign*}

The pair $(\varphi_1, \varphi_2)$ above determines exactly one product-quotient surface $S$, which we have called "the fake Godeaux surface". Its topological fundamental group is the cyclic group of order six. 

The notation is explained by the fact that minimal surfaces of general type with $K_S^2 = 1$, $p_g=0$ are called Godeaux surfaces.

By a result of M. Reid (cf. \cite{tokyo}) the order of the algebraic fundamental group of a Godeaux surface is at most five, implying that our surface has to be non-minimal.

Note that $S$ is a surface of general type.

In fact, in \cite{bp} it is shown:
\begin{theo}
The fake Godeaux surface $S$ has two $(-1)$-curves. Its minimal model has $K^2=3$.
\end{theo}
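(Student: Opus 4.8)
The plan is to locate every $(-1)$-curve on $S$, show that there are exactly two and that they are disjoint, and then contract them; since $K_S^2 = 1$ and contracting a $(-1)$-curve raises $K^2$ by one, the minimal model will then have $K^2 = 1+2 = 3$. That $S$ is non-minimal of general type is already forced, since $\pi_1(S) = \ZZ/6$ has order $6 > 5$ and Reid's bound \cite{tokyo} rules out a minimal Godeaux surface, so at least one $(-1)$-curve exists and the whole content lies in the enumeration. As a preliminary step I would read off the basket from $\varphi_1,\varphi_2$: an element of $G = PSL(2,7)$ can fix points on both $C_1$ and $C_2$ only if it has order $7$, since order-$3$ elements act freely on $C_2$ and elements of order $2,4$ act freely on $C_1$ (the signatures being $(7,3,3)$ and $(7,4,2)$, with genera $g_1=17$, $g_2=10$). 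Hence $\Sing(X)$ consists only of $\frac{1}{7}(1,a)$-points; counting $G$-orbits of pairs of order-$7$ fixed points gives three of them, and computing the two rotation numbers of the stabilizer on $C_1$ and $C_2$ at each identifies the types as $\frac{1}{7}(1,1)$ and (twice) $\frac{1}{7}(1,2)$. Thus $k(\mathcal{B}) = \frac{25}{7} + 2\cdot\frac{8}{7} = \frac{41}{7}$, and by Proposition \ref{k2e}, $K_S^2 = \frac{8(g_1-1)(g_2-1)}{|G|} - k(\mathcal{B}) = \frac{48}{7} - \frac{41}{7} = 1$ (equivalently $B(\mathcal{B})=21$, so $K_S^2 = 8 - \frac{1}{3}\cdot 21 = 1$ by Corollary \ref{k2basket}).

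Each $(-1)$-curve $E$ is the strict transform $\pi_*^{-1}(\Gamma)$ of a rational curve $\Gamma \subset X$, which I would analyse through diagrams (\ref{diagram1}) and (\ref{normalization}). First I would eliminate the easy source of rational curves: by Hurwitz the fibres of $f_1$ over the three branch points of $C_1/G$ are the curves $C_2/\langle g\rangle$ of genera $1,4,4$, while the fibres of $f_2$ are the $C_1/\langle h\rangle$ of genera $2,5,9$, so no fibre component is rational. Therefore $\Gamma$ must be a genuinely \emph{diagonal} rational curve meeting $\Sing(X)$, and by Lemma \ref{branchpoints} every branch of $\Gamma$ through a singular point forces ramification of order $7$ in the cover $\gamma\colon \tilde{\Gamma}_1 \to \PP^1$. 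Feeding the basket into Proposition \ref{countingpoints} bounds the incidences sharply: since $\frac{7}{1}=[7]$ and $\frac{7}{2}=[4,2]$, the curve $\Gamma$ can meet the $\frac{1}{7}(1,1)$-point in at most $4$ branches and each $\frac{1}{7}(1,2)$-point in at most $1$ branch.

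Next I would convert this into a finite search. A diagonal rational curve is the same datum as a cyclic $H \le G$ containing the relevant order-$7$ elements together with an $H$-equivariant pair of maps $\tilde{\Gamma}_1 \to C_1$, $\tilde{\Gamma}_1 \to C_2$ whose product maps $\tilde{\Gamma}_1$ birationally onto $\Gamma_1$ with $\Gamma_1/H \cong \Gamma$ rational. Writing $E^2$ and $K_S\cdot E$ in terms of $K_X\cdot\Gamma$, $\Gamma^2$ and the discrepancies $a_i$ of the Hirzebruch--Jung strings, and imposing that $E \cong \PP^1$ with $E^2 = K_S\cdot E = -1$, together with the incidence bounds above, leaves only finitely many combinatorial types for the triple $(\tilde{\Gamma}_1 \to \PP^1,\, d_1,\, d_2)$ of cover and projection degrees; each is then tested against the explicit generators of $G \subset \mathfrak{S}_8$, which is where MAGMA enters. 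The expected outcome is that exactly two such curves $E_1, E_2$ occur, and I would verify that they are disjoint by checking that the corresponding curves on $C_1 \times C_2$ lie over distinct loci, so that $E_1$ and $E_2$ share no singular point and meet nowhere else.

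Finally, contracting the disjoint pair $E_1 \cup E_2$ (Castelnuovo) yields a smooth surface $S_{\min}$ with $K^2 = 1+2 = 3$ and $p_g = q = 0$; it remains to confirm that $S_{\min}$ is minimal, i.e. carries no further $(-1)$-curve. I expect this to be the main obstacle, for two reasons: first, the decisive rational curves are exactly those invisible to the fibrewise analysis, so \emph{completeness} — that there is no diagonal $(-1)$-curve beyond the two found — rests entirely on the equivariant-covering enumeration rather than on a clean a priori bound; and second, after contraction one has left the product-quotient world, so Proposition \ref{countingpoints} no longer applies to $S_{\min}$, and ruling out a new $(-1)$-curve through one of the two contracted points requires pulling its strict transform back to $S$ and contradicting the just-established exhaustive list of negative curves there. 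Granting both, the minimal model of the fake Godeaux surface has $K^2 = 3$, as claimed.
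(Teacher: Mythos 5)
Your preliminary computations are correct and match the setup of the paper and of \cite{bp}: the basket is $\left\{\frac{1}{7}(1,1),\,2\times\frac{1}{7}(1,2)\right\}$, so $K_S^2=1$; the genera are $g_1=17$, $g_2=10$; no fibre component of $f_1,f_2$ is rational; and Reid's theorem forces non-minimality. But from that point on your argument is a plan with an announced ``expected outcome'', not a proof, and the plan contains a fatal flaw. You parametrize diagonal rational curves by a \emph{cyclic} subgroup $H\le G$ acting on $\tilde{\Gamma}_1$ with rational quotient. The stabilizer of a component of $\lambda_{12}^*\Gamma$ is a subgroup of $G$ but need not be cyclic (only stabilizers of \emph{points} are), and for the two $(-1)$-curves that actually exist on $S$ it is not: in the paper's construction the curve $C'\subset C_1\times C_2$ is irreducible and $G$-invariant, so $H=G=PSL(2,7)$. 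An enumeration restricted to cyclic $H$ would therefore return the empty list, and your argument would wrongly conclude that no diagonal $(-1)$-curve exists. Moreover, even with the correct search space, you never show that the existence of an $H$-equivariant pair of maps $\tilde{\Gamma}_1\to C_1$, $\tilde{\Gamma}_1\to C_2$ is decidable by a finite group-theoretic test; this is exactly why the paper does not enumerate but \emph{constructs}: it fixes branch points $(1,0,\infty)$ and $\left(0,\infty,-\frac{9}{16}\right)$, forms the normalized fibre products of $\lambda_1$ with $\xi'(t)=t^3$ and of $\lambda_2$ with $\eta'(t)=\frac{(t^3-1)(t-1)}{(t+1)^4}$, proves that the two resulting $PSL(2,7)$-covers of $\PP^1$ are isomorphic (Lemma \ref{conjugated}), and thereby obtains a $G$-invariant curve in $C_1\times C_2$ whose image $D'\subset X$ has a smooth rational strict transform $E'$ with $E'^2=-1$ (Proposition \ref{E'}); $E''$ is found analogously. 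The special value $-\frac{9}{16}$ is genuinely geometric input that no incidence bound from Proposition \ref{countingpoints} can supply.

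The second gap is minimality of the contracted surface, which you correctly identify as the main obstacle and then ``grant''. In the paper this is not granted: once $E'$ and $E''$ are exhibited, the dual graph (\ref{configS'}) of $E'$, $E''$ and the Hirzebruch--Jung strings (a $-7$ curve and two chains $[-4,-2]$) shows the two $(-1)$-curves are disjoint and, together with the constraints on rational curves through the singular points, yields that contracting them produces the minimal model with $K^2=3$ (details in \cite{bp}). So both pillars of the theorem --- existence of exactly these two $(-1)$-curves and minimality of the blow-down --- remain unproven in your proposal, and the existence half, as formulated, cannot be repaired without abandoning the cyclic-stabilizer reduction.
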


We briefly recall the construction of one of the $(-1)-$curves on $S$, for details we refer to \cite{bp}.

We can choose the branch points $p_i$ of $\lambda_1 \colon C_1 \rightarrow \PP^1$ and $p'_j$ of $\lambda_2$ at our convenience. In fact, assume $(p_1, p_2, p_3)=(1,0,\infty)$, $(p'_1, p'_2, p'_3)=(0,\infty,-\frac{9}{16})$.

Consider the normalization $\hat{C}'_1$ of the fibre product between
$\lambda_1$ and the $\ZZ/3\ZZ$-cover $\xi' \colon \PP^1 \rightarrow \PP^1$
defined by $\xi'(t)=t^3$. We have a diagram
\begin{equation*}
\xymatrix{
\hat{C}'_1\ar^{\hat{\xi}'}[r]\ar^{\hat{\lambda}'_1}[d]&C_1\ar^{\lambda_1}[d]\\
\PP^1\ar^{\xi'}[r]&\PP^1 \\
}
\end{equation*}
where the horizontal maps are $\ZZ/3\ZZ$-covers and the vertical maps
are $PSL(2,7)$-covers. 

The branch points of $\hat{\lambda}'_1$ are the three points in $\xi'^{-1}(p_1)$, all with branching index 7.

For $C_2$, we take the normalized fibre product between $\lambda_2$ and the map $\eta' \colon
\PP^1 \rightarrow \PP^1$ defined by $\eta'(t)=\frac{(t^3-1)(t-1)}{(t+1)^4}$.

Note that $\eta'$ has degree $4$ and factors
through the involution $t \mapsto \frac1t$. Therefore it is the
composition of two double covers, say $\eta'=\eta_1' \circ \eta_2'$.
We get the following diagram

\begin{equation*}
\xymatrix{
\hat{C}'_2\ar^{\hat{\eta}'}@/^1.7pc/[rr]\ar^{\hat{\eta}'_2}[r]\ar^{\hat{\lambda}'_2}[d]&\bar{C}_2'\ar[d]^{\bar{\lambda}_2'}\ar^{\hat{\eta}'_1}[r]&C_2\ar^{\lambda_2}[d]\\
\PP^1\ar_{\eta'}@/_1pc/[rr]\ar^{\eta'_2}[r]&\PP^1\ar^{\eta'_1}[r]&\PP^1\\
}
\end{equation*}
where the horizontal maps are $\ZZ/2\ZZ$-covers and the vertical maps
are $PSL(2,7)$-covers.

Then the branch points of $\hat{\lambda}'_2$ are the three points of $(\eta')^{-1}(p'_1)$, each with branching index 7.

\begin{lemma}\label{conjugated}
$(\hat{C}'_1,\hat{\lambda}'_1)$ and $(\hat{C}'_2,\hat{\lambda}'_2)$ are isomorphic as Galois covers of $\PP^1$.
\end{lemma}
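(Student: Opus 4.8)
The plan is to prove the lemma through \emph{Riemann's existence theorem}: a connected $G$-Galois cover of $\PP^1$ is determined, up to $G$-equivariant isomorphism, by its branch locus together with the spherical system of local monodromies, taken modulo simultaneous conjugation in $G$; allowing the base $\PP^1$ to be moved by a Möbius transformation adds the action on the branch points. Since $\hat{\lambda}'_1$ and $\hat{\lambda}'_2$ are both $G=PSL(2,7)$-covers, it suffices to verify that (a) their branch loci coincide as subsets of $\PP^1$, and (b) after this identification the two spherical triples of order-$7$ elements agree up to conjugation in $G$, relabelling of the three branch points, and Hurwitz (braid) moves.

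First I would pin down the two branch loci by pulling back monodromy. For $\hat{\lambda}'_1$ along $\xi'(t)=t^3$: over $0=p_2$ and $\infty=p_3$ the map $\xi'$ is totally ramified of index $3$, so the local monodromy becomes $\varphi_1(c_2)^3=\varphi_1(c_3)^3=1$ (both elements have order $3$); hence $\hat{\lambda}'_1$ is unramified there and its only branch points are the three simple preimages $\xi'^{-1}(p_1)=\{1,\om,\om^2\}$, each with monodromy of order $7$. For $\hat{\lambda}'_2$ along $\eta'(t)=\frac{(t-1)^2(t^2+t+1)}{(t+1)^4}$ the same bookkeeping gives: over $\infty=p'_2$ the fibre is the single point $t=-1$ of index $4$, so the monodromy is $\varphi_2(c_2)^4=1$; over $p'_1=0$ the fibre is $\{1,\om,\om^2\}$ with $t=1$ of index $2$ (monodromy $\varphi_2(c_1)^2$, still of order $7$) and $\om,\om^2$ simple; and over $p'_3=-\tfrac{9}{16}$ the equation $\eta'(t)=-\tfrac{9}{16}$ is the palindromic quartic $25t^4+20t^3+54t^2+20t+25=0$, which in the $t\mapsto\tfrac1t$-invariant coordinate $u=t+\tfrac1t$ becomes $(5u+2)^2=0$, forcing ramification profile $(2,2)$ and hence monodromy $\varphi_2(c_3)^2=1$. (This is exactly why the value $-\tfrac{9}{16}$ was chosen.) Thus $\hat{\lambda}'_2$ is also branched precisely over $\{1,\om,\om^2\}$ with all indices $7$, so the two loci coincide.

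Next I would compute the two spherical triples explicitly. For $\hat{\lambda}'_1$ the composite $\lambda_1\circ\hat{\xi}'\colon\hat{C}'_1\to\PP^1$ is Galois with group $G\times\ZZ/3\ZZ$ (the simple nonabelian $G$ has no $\ZZ/3\ZZ$-quotient, so the normalised fibre product realises the direct product), whence the deck $\ZZ/3\ZZ$ of $\xi'$ lifts to $\hat{C}'_1$ as a central-twisted symmetry conjugating the $G$-action; the triple therefore has the cyclic shape $(a,\,sas^{-1},\,s^2as^{-2})$ with $a=\varphi_1(c_1)$ and $s\in G$ an explicit lift determined by path-lifting. For $\hat{\lambda}'_2$ one instead traces the three loops around $1,\om,\om^2$ through $\eta'$ by hand, recording the conjugating elements produced by the degree-$4$ covering and its factorisation through $t\mapsto\tfrac1t$, with the representative at $t=1$ being $\varphi_2(c_1)^2$. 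With both triples in hand, the final step is the finite verification---readily carried out in MAGMA---that they lie in one orbit under the combined action of inner automorphisms of $G$, the ${\mathfrak S}_3$ of relabellings coming from the Möbius stabiliser of $\{1,\om,\om^2\}$, and the braid group; the conjugating element then furnishes the desired $G$-equivariant isomorphism $\hat{C}'_1\cong\hat{C}'_2$.

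The main obstacle is the monodromy computation for $\hat{\lambda}'_2$. Unlike $\xi'$, the map $\eta'$ is neither cyclic nor Galois, its ramification over $p'_1=0$ is concentrated at the single point $t=1$, and it factors through the involution $t\mapsto\tfrac1t$; consequently the conjugations picked up by the three generating loops cannot be read off from a symmetry and must be tracked carefully along chosen paths. Once both triples are written down correctly, matching them inside $PSL(2,7)$ is routine. As a consistency check on the branch data I would also verify the genera agree via Hurwitz's formula, $2g-2=168\cdot\tfrac47$, giving $g=49$ for both covers.
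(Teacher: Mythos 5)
Your framework---compute the branch loci, invoke Riemann's existence theorem, compare monodromies---is the right one, and the first half is carried out correctly: both $\hat{\lambda}'_1$ and $\hat{\lambda}'_2$ are branched exactly over the cube roots of unity with all branching indices $7$ (your reduction of the fibre over $-\tfrac{9}{16}$ to $(5u+2)^2=0$ in the coordinate $u=t+\tfrac1t$ is indeed why that value was chosen, and the genus check $g=49$ is right). But the argument stops at its crux. The two steps that would actually establish the isomorphism---writing down the spherical triple of $\hat{\lambda}'_2$ by tracing loops through the non-Galois degree-$4$ map $\eta'$, and then verifying that the two triples are equivalent---are described but not performed; you yourself call the first ``the main obstacle'' and relegate the second to an unperformed MAGMA computation. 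Since everything in the lemma beyond the branch data is precisely that verification, what you have is a plan rather than a proof. (A smaller omission: for $\hat{C}'_2$ you never argue connectedness, i.e.\ surjectivity of its monodromy; it holds because $PSL(2,7)$ is simple while the monodromy group of $\eta'$ is solvable, so the two covers being fibre-producted share no common subcover.)

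The gap can be closed with no path-tracing at all, which is the efficient route and the one in the spirit of \cite{bp}, to which the survey defers the proof. Once both covers are known to be connected $G$-covers branched exactly on $\{1,\omega,\omega^2\}$ with all local monodromies of order $7$, each monodromy homomorphism is a generating triple of order-$7$ elements of $G=PSL(2,7)$ with product $1$, and \emph{all such triples form a single $\Aut(G)$-orbit}. Indeed, by the Frobenius class-counting formula applied to the two classes of order-$7$ elements (each of size $24$), there are $216+216+6\cdot 24=576$ triples of order-$7$ elements with product $1$; a non-generating triple must lie in a proper subgroup of order divisible by $7$, hence inside a single Sylow $7$-subgroup (the only maximal subgroups of order divisible by $7$ are the Frobenius groups $7{:}3$), which accounts for $8\cdot 30=240$ triples; so exactly $336$ triples generate. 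Since $\Aut(G)\cong PGL(2,7)$ has order $336$ and acts freely on generating triples, there is exactly one orbit. Consequently the two monodromy homomorphisms $\pi_1(\PP^1\setminus\{1,\omega,\omega^2\})\ra G$ differ by an automorphism of $G$, which is precisely the statement that the two covers are isomorphic as Galois covers of $\PP^1$ (even over the identity of $\PP^1$). With this one-orbit fact---a one-line MAGMA or character-theory check---your delicate computation of the $\hat{\lambda}'_2$-triple and the subsequent matching inside $PSL(2,7)$ become unnecessary.
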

Consider the curve $\hat{C}':=\hat{C}'_1=\hat{C}'_2$ and let $C':=(\hat{\xi}',\hat{\eta}')(\hat{C}')\subset C_1
\times C_2$. $C'$  is $G-$invariant, and the quotient is a
rational curve $\hat{C}'/G \cong \PP^1 \stackrel{e'}{\rightarrow} D'$ contained in the quotient model $X$ of the fake Godeaux surface $S$.

\begin{prop}\label{E'}
Let $E'$ be the strict transform of
$D'$ on $S$. Then $E'$ is a smooth rational curve
with self intersection $-1$.
\end{prop}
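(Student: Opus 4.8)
The plan is to show that $E'$ is a smooth rational curve and to compute $K_S \cdot E' = -1$; adjunction on the smooth surface $S$ then forces $E'^2 = -1$, so that $E'$ is exceptional of the first kind. Rationality is immediate: the map $e' = \nu \colon \PP^1 \to D'$ exhibits $D'$, and hence its strict transform $E'$, as a rational curve, so $E'$ is birational to $\PP^1$ and $p_a(E') = 0$ once smoothness is established.

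First I would establish smoothness of $E'$. Since $\pi$ is an isomorphism over the smooth locus of $X$, $E'$ can only be singular either at a point lying over $\Sing(X)$ or at a singular point of $D'$ in the smooth locus of $X$. To rule out the latter I would show that $C' \subset C_1 \times C_2$ is smooth and that $(\hat\xi',\hat\eta')$ embeds $\hat C'$ as $C'$, so that $D' = C'/G$ is smooth away from $\Sing(X)$. For the points over $\Sing(X)$ I would invoke Lemma \ref{branchpoints}: the branch points of $\gamma$ (which here is the $G$-cover $\hat\lambda_1' \colon \hat C' \to \PP^1$, branched at the three points in $\xi'^{-1}(p_1)$ with index $7$) are exactly the points $\nu$ maps to the singular points of $X$ met by $D'$, each of type $\frac1n(1,a)$ with $7 \mid n$. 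A local computation in the cyclic quotient chart then shows that the strict transform passes smoothly through such a point, meeting the Hirzebruch--Jung string transversally in a single component.

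Next I would compute $K_S \cdot E'$. Writing $K_S = \pi^* K_X + \sum_i a_i E_i$ and using the projection formula, $K_S \cdot E' = K_X \cdot D' + \sum_i a_i (E_i \cdot E')$. Since $\lambda_{12}$ is \'etale in codimension one, $\lambda_{12}^* K_X = K_{C_1 \times C_2}$ and $\lambda_{12}^* D' = C'$, whence
\[
K_X \cdot D' = \frac{1}{|G|} K_{C_1 \times C_2} \cdot C' = \frac{1}{|G|}\left[(2g_1 - 2) d_1 + (2g_2 - 2) d_2\right],
\]
where $d_1, d_2$ are the degrees of the two projections of $C'$. Hurwitz' formula applied to $\varphi_1, \varphi_2$ gives $g_1 = 17$ and $g_2 = 10$, and the construction gives $d_1 = \deg \hat\xi' = 3$, $d_2 = \deg \hat\eta' = 4$; with $|G| = 168$ this yields $K_X \cdot D' = 1$. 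It then remains to evaluate the discrepancy contribution $\sum_i a_i (E_i \cdot E') = -2$ from the singular points met by $D'$, using the types identified above and the intersection data of the strict transform, giving $K_S \cdot E' = -1$. As a consistency check one may instead compute $D'^2 = \frac{1}{|G|}(C')^2$ from $p_a(C') = 49$ and recover the same value of $E'^2$.

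Finally, adjunction gives $E'^2 + K_S \cdot E' = 2 p_a(E') - 2 = -2$, so $E'^2 = -1$; being a smooth rational curve with self-intersection $-1$, $E'$ is a $(-1)$-curve. I expect the main obstacle to be the local analysis at $\Sing(X)$: pinning down precisely which singular points $D'$ passes through, their types $\frac1n(1,a)$ and Hirzebruch--Jung strings, and the exact components and multiplicities with which the strict transform meets the exceptional divisor. This is where Lemma \ref{branchpoints} (and an argument in the spirit of Proposition \ref{countingpoints}) is essential, and where the specific features of the construction --- in particular the branch value $-\tfrac{9}{16}$ forcing $(\hat C_1',\hat\lambda_1')$ and $(\hat C_2',\hat\lambda_2')$ to be isomorphic covers (Lemma \ref{conjugated}) --- enter decisively.
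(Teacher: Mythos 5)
Your overall skeleton (rationality, smoothness, compute $K_S\cdot E'$, conclude $E'^2=-1$ by adjunction) is reasonable, and your global bookkeeping is correct: $g_1=17$, $g_2=10$, the map $\hat C'\to C'$ is birational since its degree divides both $\deg\hat\xi'=3$ and $\deg\hat\eta'=4$, and hence $K_X\cdot D'=\tfrac{1}{168}\left(32\cdot 3+18\cdot 4\right)=1$. (Note that the survey itself gives no proof of Proposition \ref{E'}; it defers to \cite{bp}, and what is deferred there is exactly the local analysis you postpone.) However, one of your intermediate steps is false: $C'$ is \emph{not} smooth and $(\hat\xi',\hat\eta')$ does \emph{not} embed $\hat C'$. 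Indeed, $\Sing(X)$ consists of one point of type $\frac{1}{7}(1,1)$, resolved by a single $(-7)$-curve of discrepancy $-\frac{5}{7}$, and two points of type $\frac{1}{7}(1,2)$, resolved by strings $[4,2]$ with discrepancies $-\frac{4}{7}$ and $-\frac{2}{7}$ (this is what diagram (\ref{configS'}) records). Since $\gamma=\hat\lambda'_1$ has exactly three branch points, the value $\sum_i a_i(E_i\cdot E')=-2=-\tfrac{14}{7}$ you need forces $E'$ to meet the $(-7)$-curve with multiplicity two (the only admissible decomposition with simple transversal branches is $2\cdot(-\tfrac57)+(-\tfrac47)$) --- that is the double edge in (\ref{configS'}). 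So two of the three branch points of $\gamma$ land on the \emph{same} singular point, $D'$ has a double point at the $\frac{1}{7}(1,1)$-point, and $C'$ carries a full $G$-orbit of $24$ double points above it: Hurwitz gives $g(\hat C')=49$, while the correct configuration gives $\pi^*D'=E'+\tfrac27E_1+\tfrac27F_1+\tfrac17F_2$, hence $D'^2=-\tfrac17$, $(C')^2=-24$ and $p_a(C')=73\neq 49$. For the same reason, your proposed consistency check ``compute $(C')^2$ from $p_a(C')=49$'' conflates arithmetic with geometric genus and would return the wrong value $D'^2=-\tfrac37$.

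This is not a repairable detail of your smoothness argument; it \emph{is} the proof. The content of Proposition \ref{E'} is precisely (a) to determine that two branch points of $\gamma$ map to the $\frac{1}{7}(1,1)$-point and the third to one $\frac{1}{7}(1,2)$-point, with the strict transform meeting the $(-4)$-curve there (the remaining singular point of $X$ is missed by $D'$ altogether), and (b) to show that the two branches of $D'$ at its double point separate in the minimal resolution, i.e.\ hit the $(-7)$-curve in two \emph{distinct} points; otherwise $E'$ would be nodal, adjunction would read $E'^2+K_SE'=-2+2\delta$ with $\delta>0$, and the conclusion fails. Lemma \ref{branchpoints} only yields $7\mid n$ for the singular points hit, so it cannot distinguish these configurations; what is required (and what \cite{bp} carries out) is an explicit analysis of the group action in local coordinates at the three branch points. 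Your proposal defers exactly this step (``it remains to evaluate $\dots=-2$''), and moreover approaches it through a premise --- smoothness of $C'$ --- that is incompatible with the conclusion you are trying to prove.
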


In a similar way we find a second $(-1)$-curve $E''$ on $S$. Looking at the dual graph (cf. diagram (\ref{configS'})) of the configuration of the rational curves $E'$, $E''$ and the Hirzebruch-Jung strings over the singular points of $X$, it follows easily that blowing down these two exceptional curves we get the minimal model $S'$ of $S$ with invariants $K_{S'}^2 =3$, $p_g=0$.

\begin{equation}\label{configS'}
\xymatrix{
&*+[o][F]{-1}\ar@{-}[d]\ar@{=}[dl]\ar@{-}[dr]&\\
*+[o][F]{-7}\ar@{=}[d]&*+[o][F]{-4}\ar@{-}[d]\ar@{-}[dl]&*+[o][F]{-4}\ar@{-}[d]\\
*+[o][F]{-1}&*+[o][F]{-2}&*+[o][F]{-2}\\
}
\end{equation}

\begin{rem}
The fake Godeaux surface and another product-quotient surface $S$ with $K_S^2 = -1$ are up to now the only known non-minimal product-quotient surfaces of general type.
\end{rem}

\section{Problems and new input} \label{problems}
\subsection{Hodge theoretic information of product-quotient surfaces} \

\noindent
We start with the following:
\begin{prop}\label{parity}
Let $X:=(C_1 \times C_2)/G$ be the quotient model of a product-quotient surface. Then 
\begin{itemize}
 \item $\dim H^2(X) \equiv 0 \mod 2$,
\item $\dim H^2(X) \geq 2$.
\end{itemize}

\end{prop}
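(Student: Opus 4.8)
The plan is to compute $H^2(X,\CC)$ directly as the $G$-invariant part of the cohomology of the smooth cover $C_1\times C_2$, and then to read off both the lower bound and the parity from the Hodge decomposition.

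First I would invoke the transfer isomorphism for a finite quotient: since $X=(C_1\times C_2)/G$ is the topological quotient of a compact manifold by a finite group, rational (hence complex) cohomology satisfies $H^k(X,\CC)\cong H^k(C_1\times C_2,\CC)^G$ for all $k$. Combining this in degree two with the K\"unneth formula gives
\[
H^2(X,\CC)\cong \big(H^2(C_1)\otimes H^0(C_2)\big)^G \oplus \big(H^1(C_1)\otimes H^1(C_2)\big)^G \oplus \big(H^0(C_1)\otimes H^2(C_2)\big)^G.
\]
Because $G$ acts holomorphically it preserves orientation, hence acts trivially on $H^0(C_i)$ and on the top class $H^2(C_i)$; thus the first and third summands are automatically $G$-invariant and supply the two independent classes $[C_1\times\{pt\}]$ and $[\{pt\}\times C_2]$, which already yields $\dim H^2(X)\geq 2$.

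It then remains to show that the middle summand has even dimension. Set $V_i:=H^1(C_i,\CC)$, a $G$-representation carrying the Hodge decomposition $V_i=V_i^{1,0}\oplus V_i^{0,1}$ that $G$ respects (acting holomorphically, it preserves $V_i^{1,0}=H^0(C_i,\Omega^1_{C_i})$), together with the antilinear complex conjugation $c_i$ coming from the real structure $H^1(C_i,\RR)$, which interchanges $V_i^{1,0}$ and $V_i^{0,1}$. On $W:=V_1\otimes V_2$ the conjugation $c:=c_1\otimes c_2$ commutes with the $G$-action (the action is defined over $\QQ$), and the Hodge pieces are $W^{2,0}=V_1^{1,0}\otimes V_2^{1,0}$, $W^{0,2}=V_1^{0,1}\otimes V_2^{0,1}$ and $W^{1,1}=(V_1^{1,0}\otimes V_2^{0,1})\oplus(V_1^{0,1}\otimes V_2^{1,0})$. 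Since $c$ is an antilinear $G$-equivariant isomorphism sending $W^{2,0}$ onto $W^{0,2}$ and interchanging the two summands of $W^{1,1}$, it identifies $(W^{2,0})^G$ with $(W^{0,2})^G$ and the two pieces of $(W^{1,1})^G$ with each other. Writing $p:=\dim(W^{2,0})^G$ and $q:=\dim(V_1^{1,0}\otimes V_2^{0,1})^G$, one gets $\dim W^G=p+2q+p=2(p+q)$, so the middle term is even and $\dim H^2(X)=2+2(p+q)$.

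The only genuinely delicate points are the justifications that (i) the rational cohomology of the quotient is the invariant subspace — this is the transfer argument, where finiteness of $G$ is used — and (ii) that the $G$-action commutes with complex conjugation and preserves the Hodge decomposition, which is exactly what forces conjugate Hodge pieces to carry isomorphic invariant subspaces. Everything else is bookkeeping within the K\"unneth and Hodge decompositions; I expect no computational obstacle, and I note that the argument uses nothing about the singularities of $X$ beyond the fact that $X$ is a global finite quotient.
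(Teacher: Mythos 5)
Your proof is correct and follows essentially the same route as the paper: identify $H^2(X,\CC)$ with $H^2(C_1\times C_2,\CC)^G$ via the finite-quotient (transfer) isomorphism, decompose by K\"unneth and Hodge theory, observe that the two invariant classes coming from the factors give $\dim H^2(X)\geq 2$, and pair off the remaining invariants under complex conjugation to get evenness. The only cosmetic difference is the order of operations — you apply K\"unneth first and then the Hodge decomposition to $H^1(C_1)\otimes H^1(C_2)$, while the paper decomposes by Hodge type first and then applies K\"unneth inside $H^1(\Omega^1_{C_1\times C_2})^G$ — but the resulting summands and the conjugation-pairing argument are identical.
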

\begin{proof}
By the Hodge decomposition and K\"unneth's formula we get: 

\begin{align*}
H^2(X) \cong & H^2({C_1 \times C_2},\CC)^G \\
\cong & H^0(\Omega^2_{C_1 \times C_2})^G \oplus H^1(\Omega^1_{C_1 \times C_2})^G
\oplus (H^0(\Omega^2_{C_1 \times C_2})^G)^*.
\end{align*}

Therefore $h^2(X,\CC) = 2 \cdot h^0(\Omega^2_{C_1\times C_2})^G + h^1(\Omega^1_{C_1\times C_2})^G$, whence the claim is proven once we show that $h^1(\Omega^1_{C_1\times C_2})^G \equiv 0 \mod 2$.

We recall that 
by K\"unneth's formula (cf. e.g. \cite{kaup}) and Hodge theory

\begin{align*}
H^1(\Omega^1_{C_1 \times C_2})^G\cong& \left( H^1(\Omega^1_{C_1}) \otimes H^0(\hol_{C_2}) \right)^G \oplus
\left( H^1(\Omega^1_{C_2}) \otimes H^0(\hol_{C_1}) \right)^G\\
\oplus& \left( H^0(\Omega^1_{C_1}) \otimes \overline{H^0(\Omega^1_{C_2})} \right)^G \oplus \left( \overline{H^0(\Omega^1_{C_1})} \otimes H^0(\Omega^1_{C_2}) \right)^G\\
\cong & \ \CC^2 \oplus V \oplus \bar{V},
\end{align*}
where 
$$V:= \left( H^0(\Omega^1_{C_1}) \otimes \overline{H^0(\Omega^1_{C_2})} \right)^G.$$

\end{proof}

Let $\sigma \colon S \rightarrow X$ be the minimal resolution of the singularities of $X$. Then we can show the following:

\begin{prop}\label{hodge} \
 \begin{enumerate}
  \item Let $S$ be a product-quotient surface with $q(S) = 0$. Then $H^2(S, \CC) \cong H^2 (X, \CC) \oplus \CC^l$, where $l$ is the number of irreducible components of the exceptional locus of $\sigma$ (which consists of Hirzebruch-Jung strings over each singular point of $X$).
\item If $p_g(S) = 0$, then $H^0(C_1 \times C_2, \Omega^2_{C_1 \times C_2})^G = 0$. In particular, $H^2(X, \CC) \cong H^1(C_1 \times C_2, \Omega^1_{C_1 \times C_2})^G$.
 \end{enumerate}

\end{prop}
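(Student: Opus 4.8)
The plan is to treat the two parts by different tools: part (1) is purely topological, using the Mayer--Vietoris sequence attached to the resolution, while part (2) is a statement about holomorphic forms that reduces, via the rationality of quotient singularities, to the computation already carried out in the proof of Proposition \ref{parity}.

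For part (1), I would set $Z := \Sing(X)$ and $E := \sigma^{-1}(Z)$, the exceptional locus, and exploit the Cartesian ``abstract blow-up'' square with vertices $E, S, Z, X$, in which $\sigma$ is proper and an isomorphism over $X \setminus Z$. Such a square yields a long exact Mayer--Vietoris sequence
\[
\cdots \to H^{k}(X,\CC) \to H^{k}(S,\CC)\oplus H^{k}(Z,\CC) \to H^{k}(E,\CC) \to H^{k+1}(X,\CC)\to\cdots.
\]
Since $Z$ is finite, $H^{k}(Z,\CC)=0$ for $k>0$; and since $E$ is a disjoint union of Hirzebruch--Jung strings, each a tree of smooth rational curves, each component is simply connected, so $H^{1}(E,\CC)=0$ and $H^{2}(E,\CC)\cong\CC^{l}$, one summand per irreducible component $E_i$. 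In degree two the sequence therefore collapses to
\[
0\to H^{2}(X,\CC)\to H^{2}(S,\CC)\xrightarrow{\ r\ } \CC^{l}\to H^{3}(X,\CC),
\]
where $r$ is restriction to $E$. To finish I would show $r$ is surjective: under the identification $H^{2}(E_i,\CC)\cong\CC$ by integration, $r$ sends a class $\alpha$ to the tuple of intersection numbers $(\alpha\cdot E_i)_i$, so the exceptional classes $[E_j]\in H^2(S,\CC)$ are sent to the columns $(E_i\cdot E_j)_i$ of the intersection matrix. This matrix is negative definite (the standard Hirzebruch--Jung matrix with $-b_i$ on the diagonal), hence invertible, so its columns span $\CC^l$ and $r$ is onto. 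The resulting short exact sequence splits as vector spaces, giving the claim; note that this argument is purely topological and does not actually use $q(S)=0$.

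For part (2), the key point is the identification $p_g(S)=h^{0}\big(C_1\times C_2,\Omega^{2}_{C_1\times C_2}\big)^{G}$. A $G$-invariant holomorphic $2$-form on $C_1\times C_2$ descends to a section of the reflexive canonical sheaf $\omega_X = i_*\omega_{X^0}$, because the quotient map $\pi\colon C_1\times C_2\to X$ is ramified only at the isolated points with nontrivial stabilizer and hence is étale in codimension one; conversely such sections pull back and extend across the codimension-two branch locus by Hartogs. Thus $H^0(X,\omega_X)\cong H^0(\Omega^2_{C_1\times C_2})^G$. Now cyclic quotient singularities are rational, so $\sigma_*\omega_S=\omega_X$, whence $p_g(S)=h^0(S,\omega_S)=h^0(X,\omega_X)=h^0(\Omega^2_{C_1\times C_2})^G$; assuming $p_g(S)=0$ forces $H^0(\Omega^2_{C_1\times C_2})^G=0$. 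Feeding this into the decomposition $H^2(X)\cong H^0(\Omega^2_{C_1\times C_2})^G\oplus H^1(\Omega^1_{C_1\times C_2})^G\oplus (H^0(\Omega^2_{C_1\times C_2})^G)^*$ from the proof of Proposition \ref{parity}, the outer two summands vanish and we are left with $H^2(X,\CC)\cong H^1(\Omega^1_{C_1\times C_2})^G$.

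The step I expect to be the main obstacle is the equality $p_g(S)=h^0(\Omega^2_{C_1\times C_2})^G$, or more precisely the fact that every invariant $2$-form extends holomorphically across the exceptional locus: a priori the non-canonical cyclic quotient singularities occurring here (those with $a\neq n-1$) could force poles along the Hirzebruch--Jung curves, so one only gets an inclusion $H^0(S,\Omega^2_S)\hookrightarrow H^0(\Omega^2_{C_1\times C_2})^G$ for free. It is precisely the rationality of quotient singularities---equivalently $\sigma_*\omega_S=\omega_X$ via Grauert--Riemenschneider---that upgrades this to an isomorphism and lets the vanishing of $p_g$ detect the vanishing of invariant $2$-forms. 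In part (1) the only nontrivial input is the negative definiteness of the intersection matrix, which is standard, so that part should be routine.
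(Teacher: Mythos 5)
Your proposal is correct, and in fact complete. Note that the paper, being a survey, states Proposition \ref{hodge} with no proof at all (``Then we can show the following:'' is followed directly by the statement), so there is no argument in the text to compare yours against; what can be said is that your proof supplies exactly what the surrounding text presupposes. For part (1), the blow-up Mayer--Vietoris sequence together with surjectivity of the restriction $H^2(S,\CC)\to H^2(E,\CC)\cong \CC^l$, forced by the nondegeneracy (negative definiteness) of the Hirzebruch--Jung intersection matrix, is a complete and standard argument, and your observation that $q(S)=0$ is not actually needed here is accurate. For part (2), the identification $p_g(S)=h^0(\Omega^2_{C_1\times C_2})^G$, obtained from quasi-\'etaleness of $C_1\times C_2\to X$ (which uses the paper's standing assumption that $G$ acts faithfully on both factors) plus the fact that quotient singularities impose no adjunction conditions, is precisely the phenomenon the paper records in the Remark immediately following the proposition; feeding it into the decomposition from the proof of Proposition \ref{parity} finishes the claim, as you say. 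One small imprecision: Grauert--Riemenschneider by itself only gives $R^1\sigma_*\omega_S=0$, not $\sigma_*\omega_S=\omega_X$; the latter equality is equivalent to rationality of the singularities via local duality, or can be seen directly from log terminality of cyclic quotient singularities (all discrepancies $a_i>-1$, so an invariant $2$-form pulled back to $S$ has pole order strictly less than $1$ along each $E_i$ and hence extends). This does not affect the validity of your argument, only the attribution of the key fact.
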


\

\begin{rem} \
\begin{enumerate}
 \item Proposition \ref{hodge} shows that the singularities of the quotient-model $X$ give no conditions of adjunction for canonical forms, even if the singularities are not canonical. This is not true for the bicanonical divisor.
\item The above results make clear that the condition that $S$ has vanishing geometric genus gives strong restrictions on the $G$-modules $H^0(C_i, \Omega^1_{C_i})$. For example, we can list the following properties:
\begin{enumerate}
 \item if $\chi$ is an irreducible real character of $G$, then $H^0(\Omega^1_{C_1})^{\chi} = 0$ or $H^0(\Omega^1_{C_2})^{\bar{\chi}} = 0$;
\item $\dim H^2(X, \CC) > 2$ if and only if there is an irreducible non real character $\chi$ of $G$ such that $H^0(\Omega^1_{C_1})^{\chi} \neq 0$ and $H^0(\Omega^1_{C_2})^{\chi} \neq 0$.
\end{enumerate}
Each time that such a situation occurs, it raises the dimension of $\dim H^2(X, \CC)$ by two.

\end{enumerate}

\end{rem}

The last statement can be shown  calculating 
$$
V:= \left( H^0(\Omega^1_{C_1}) \otimes \overline{H^0(\Omega^1_{C_2})} \right)^G.
$$
with the following version of Schur's lemma (cf. e.g. \cite{langrep}):

\begin{lemma}
 Let $G$ be a finite group and let $W$ be an irreducible $G$-representation. Then
\begin{enumerate}
 \item $\dim (W \otimes W^*)^G = 1$;
\item if $W'$ is an irreducible $G$-representation not isomorphic to $W^*$, then $\dim (W \otimes W')^G = 0$.
\end{enumerate}
\end{lemma}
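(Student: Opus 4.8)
The plan is to reduce both statements to the standard orthogonality of irreducible characters. Let $\chi_W$ denote the character of $W$, so that $\chi_{W^*} = \overline{\chi_W}$, and for any $G$-representation $U$ the dimension of the invariant subspace $U^G$ equals the multiplicity of the trivial representation in $U$, which by character theory is
\begin{equation*}
\dim U^G = \frac{1}{|G|} \sum_{g \in G} \chi_U(g).
\end{equation*}
First I would apply this with $U = W \otimes W^*$. Since $\chi_{W \otimes W^*}(g) = \chi_W(g)\overline{\chi_W(g)} = |\chi_W(g)|^2$, the formula gives
\begin{equation*}
\dim (W \otimes W^*)^G = \frac{1}{|G|} \sum_{g \in G} |\chi_W(g)|^2 = \langle \chi_W, \chi_W \rangle,
\end{equation*}
and this inner product equals $1$ precisely because $W$ is irreducible. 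That settles part (1).

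For part (2) I would repeat the computation with $U = W \otimes W'$, where $W'$ is irreducible and not isomorphic to $W^*$. Here $\chi_{W \otimes W'}(g) = \chi_W(g)\chi_{W'}(g)$, so
\begin{equation*}
\dim (W \otimes W')^G = \frac{1}{|G|} \sum_{g \in G} \chi_W(g)\chi_{W'}(g) = \langle \overline{\chi_W}, \chi_{W'} \rangle = \langle \chi_{W^*}, \chi_{W'} \rangle.
\end{equation*}
By orthogonality of irreducible characters this inner product vanishes unless $W' \cong W^*$; since by hypothesis $W' \not\cong W^*$, we conclude $\dim (W \otimes W')^G = 0$, as claimed.

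Alternatively, and perhaps more in the spirit of the name, one can argue representation-theoretically without invoking character orthogonality directly: the space $(W \otimes W')^G$ is canonically isomorphic to $\operatorname{Hom}_G(W'^*, W)$ (or equivalently $\operatorname{Hom}_G(W^*, W')$), and the classical Schur's lemma says this Hom-space is one-dimensional when the two irreducibles are isomorphic and zero otherwise. Both statements then read off immediately, with part (1) corresponding to $W'^* \cong W$, i.e. $W' = W^*$. I expect no genuine obstacle here, as this is a textbook fact; the only point requiring a line of care is the identification $\chi_{W^*} = \overline{\chi_W}$ and the canonical isomorphism $(W \otimes W')^G \cong \operatorname{Hom}_G(W'^*, W)$, both of which are standard for finite groups over $\CC$ where every representation is unitarizable.
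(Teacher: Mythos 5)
Your proof is correct. The paper itself gives no proof of this lemma---it is stated with a pointer to Serre's book (\cite{langrep})---so there is nothing in-paper to compare against; your character-orthogonality computation, and equivalently the identification $(W \otimes W')^G \cong \operatorname{Hom}_G(W'^*, W)$ followed by classical Schur, is exactly the standard textbook argument that citation stands for (the only cosmetic point being that $\frac{1}{|G|}\sum_g \chi_W(g)\chi_{W'}(g)$ agrees with $\langle \chi_{W^*}, \chi_{W'}\rangle$ up to a complex conjugation depending on your inner-product convention, which is harmless since the orthogonality relations yield the real values $0$ and $1$).
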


\begin{problem}
 Determine $\dim H^2(X, \CC) -2$. Or at least give a reasonable upper bound.
\end{problem}

\begin{cor}
 Let $G$ be a group, where all irreducible characters are real (i.e. self dual). Then for each product-quotient surface $S$ with $p_g=q=0$ and group $G$ we have $\dim H^2(X, \CC) = 2$.
\end{cor}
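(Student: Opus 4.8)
The plan is to reduce the claim to the vanishing of the $G$-invariant space
$$V = \left(H^0(\Omega^1_{C_1}) \otimes \overline{H^0(\Omega^1_{C_2})}\right)^G$$
that appears in the proof of Proposition \ref{parity}. Since $p_g(S)=0$, Proposition \ref{hodge}(2) identifies $H^2(X,\CC)$ with $H^1(\Omega^1_{C_1\times C_2})^G$, and the K\"unneth and Hodge computation in the proof of Proposition \ref{parity} provides a $G$-equivariant isomorphism $H^1(\Omega^1_{C_1\times C_2})^G \cong \CC^2 \oplus V \oplus \bar V$. Hence $\dim H^2(X,\CC) = 2 + 2\dim V$, and everything comes down to showing $V=0$.

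The key point is that the reality hypothesis removes the conjugation in $V$. For any complex representation $W$ of a finite group one has $\overline{W}\cong W^*$, because both carry the character $\overline{\chi_W}$. If every irreducible character of $G$ is real, then every irreducible representation is self-dual, and hence so is every $G$-representation; in particular $\overline{H^0(\Omega^1_{C_2})}\cong H^0(\Omega^1_{C_2})$ as $G$-modules. Feeding this into the definition of $V$ gives a $G$-isomorphism
$$V \cong \left(H^0(\Omega^1_{C_1}) \otimes H^0(\Omega^1_{C_2})\right)^G.$$

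Finally I would identify the right-hand side with the space that governs the geometric genus. By K\"unneth one has $H^0(\Omega^1_{C_1})\otimes H^0(\Omega^1_{C_2}) \cong H^0(\Omega^2_{C_1\times C_2})$ as $G$-modules, so the displayed invariants equal $H^0(\Omega^2_{C_1\times C_2})^G$, which vanishes exactly because $p_g(S)=0$, again by Proposition \ref{hodge}(2). Therefore $V=0$ and $\dim H^2(X,\CC)=2$; this also meets the lower bound $\dim H^2(X)\geq 2$ of Proposition \ref{parity}.

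Should one prefer to avoid the conjugation manipulation, the same conclusion follows by decomposing $H^0(\Omega^1_{C_i})$ into isotypic components and applying the stated version of Schur's lemma termwise, which yields $\dim V = \sum_\chi a_\chi b_\chi$ with $a_\chi, b_\chi$ the multiplicities of $\chi$ in $H^0(\Omega^1_{C_1})$ and $H^0(\Omega^1_{C_2})$; the condition $p_g=0$ reads $\sum_\chi a_\chi b_{\bar\chi}=0$, and reality ($\bar\chi=\chi$) turns this into $\dim V=0$. The one step I would handle with care is tracking the two distinct conjugations and checking that the isomorphism $\overline{H^0(\Omega^1_{C_2})}\cong H^0(\Omega^1_{C_2})$ is genuinely $G$-equivariant, since it is exactly this compatibility with the $G$-action that legitimizes the reduction to the $p_g=0$ statement.
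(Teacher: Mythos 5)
Your proposal is correct and takes essentially the same approach as the paper: the paper likewise reduces the corollary to the vanishing of $V=\left(H^0(\Omega^1_{C_1})\otimes\overline{H^0(\Omega^1_{C_2})}\right)^G$ and computes $V$ with the stated version of Schur's lemma, which is precisely your second (isotypic-decomposition) argument, while your primary argument --- using $\overline{W}\cong W^*\cong W$ to identify $V$ with $H^0(\Omega^2_{C_1\times C_2})^G=0$ --- is just a global repackaging of the same computation. The equivariance point you flag is handled by character theory: for a finite group the conjugate and dual representations both have character $\bar\chi$, hence are $G$-isomorphic, and reality of all irreducible characters then gives the self-duality you need.
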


\subsection{New numerical invariants of product-quotient surfaces} \

\noindent
Since the algorithm developed in \cite{bp} is getting very time and memory consuming when $K^2$ is getting small. To be more precise, it seems very unlikely that it is possible to push the calculations to values of $K^2 \leq -3$. 
This is a serious obstacle for finishing the classification of product-quotient surfaces of general type with $\chi(S) =1$ even if a lower bound for $K^2$ can be found.

Therefore it seems natural to look for different invariants of product-quotient surfaces.

We introduce the following invariants of a cyclic quotient singularity of type $\frac{1}{n}(1,q)$ (recall that $1 \leq q \leq n-1$, $\gcd(n,q) =1$ and $q'$ is the multiplicative inverse of $q\mod n$).

\

\begin{defin} \
\begin{itemize}
\item[i)] the continued fraction
$$
\frac{n}{q} = b_1 - \frac{1}{b_2 - \frac{1}{b_3 - \ldots}} =:[b_1, \ldots, b_l];
$$
 $\frac{q}{n}=[b_1,\ldots,b_l]$, $b_i \in \NN$, $b_i \geq 2$;
\item[ii)] $l\left(\frac{1}{n}(1,q)\right)$ is the length of the continued fraction;
\item[iii)] $\gamma\left(\frac{1}{n}(1,q)\right):=\frac16 \left[ \frac{q+q'}n + \sum_{i=1}^l (b_i-3)\right]$;
\item[iv)] $\mu\left(\frac{1}{n}(1,q)\right)=1-\frac1n$.
\item[v)] the index of the singularity $I\left(\frac{1}{n}(1,q)\right)=\frac{n}{\gcd(n,q+1)}$.
\end{itemize}
\end{defin}

It is immediate how to globalize these invariants and define the same invariants also for the basket of singularities of the quotient model of a product-quotient surface. 

\begin{defin}[\bf Invariants of the basket $\mathfrak{B}$]
Let ${\mathfrak B}$ be the basket of singularities of a product-quotient surface (note that the only hypothesis we need is that all singularities in the basket are cyclic quotient singularities):
\begin{itemize}
\item[] $l:=l(\mathfrak{B}):=\sum_{x \in {\mathfrak B}} l(x)$;
\item[] $\gamma:= \gamma(\mathfrak{B}):=\sum_{x \in {\mathfrak B}} \gamma(x)$;
\item[] $\mu:= \mu(\mathfrak{B}) :=\sum_{x \in {\mathfrak B}} \mu(x)$;
\item[] $I:= I(\mathfrak{B}) :=\lcm_{x \in {\mathfrak B}} I(x)$.
\end{itemize}
\end{defin}

\begin{rem}
The invariants $e,k, B$ introduced in \cite{bp}) (cf. definition \ref{oldinv}) depend on $l, \mu, \gamma$ as follows:

\begin{itemize}
\item[-] $e\left(\frac{1}{n}(1,q)\right)=l\left(\frac{1}{n}(1,q)\right)+\mu\left(\frac{1}{n}(1,q)\right)$;
\item[-] $k\left(\frac{1}{n}(1,q)\right)=6\gamma\left(\frac{1}{n}(1,q)\right)+l\left(\frac{1}{n}(1,q)\right)-2\mu\left(\frac{1}{n}(1,q)\right)$;
\item[-] $B\left(\frac{1}{n}(1,q)\right)=3\left(2\gamma\left(\frac{1}{n}(1,q)\right)+l\left(\frac{1}{n}(1,q)\right)\right)$.
\end{itemize}
The same relations as above also hold for the global invariants $e,k,B$ and $l, \mu, \gamma$.
\end{rem}

Let $\sigma \colon S \rightarrow X$ be the minimal resolution of singularities (of the quotient model). Then the invariants $K^2_S$, $\chi:=\chi(\hol_S)$ are related to the basket 
${\mathfrak B} (= \mathfrak{B}(X))$ (in terms of the above defined invariants) as follows:

\begin{prop}[\cite{bp}, prop. 2.8] \

\begin{itemize}
\item $K^2_S=8\chi(\hol_S)-2\gamma-l$;
\item $\chi(\hol_S)=\frac{(g_1-1)(g_2-1)}{|G|}+\frac{\mu-2\gamma}{4}$.
\end{itemize}
\end{prop}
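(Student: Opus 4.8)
The plan is to derive both identities purely algebraically from the formulae already established for the old invariants $e(\mathcal{B})$, $k(\mathcal{B})$, $B(\mathcal{B})$, feeding in the dictionary between these and the new invariants $l,\mu,\gamma$ recorded in the Remark preceding the statement. Since every quantity in sight is additive over the basket (and $K^2$ and $\chi$ themselves are already expressed through basket sums in Proposition \ref{k2e} and Corollary \ref{k2basket}), it is enough to work with the local relations
$$e = l+\mu,\qquad k = 6\gamma+l-2\mu,\qquad B = 3(2\gamma+l)$$
and sum over $\mathcal{B}$. I would first confirm these by direct comparison of Definition \ref{oldinv} with the new definitions: $e_x = l+1-\frac1n$ matches $l+\mu$ since $\mu = 1-\frac1n$; for $k$ one rewrites $6\gamma = \frac{q+q'}{n}+\sum_{i=1}^l(b_i-3) = \frac{q+q'}{n}+\sum_{i=1}^l(b_i-2)-l$ and substitutes to recover $k_x = -2+\frac{2+q+q'}{n}+\sum(b_i-2)$; and $B = 2e+k$ then collapses to $3(2\gamma+l)$ once the $\mu$-terms cancel.

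For the first identity I would start from Corollary \ref{k2basket}, namely $K_S^2 = 8\chi(S)-\frac13 B(\mathcal{B})$, and substitute $B(\mathcal{B}) = 3(2\gamma+l)$. The factor $\frac13$ cancels and gives $K_S^2 = 8\chi(\hol_S)-2\gamma-l$ immediately, with no further work.

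For the second identity I would combine Noether's formula $12\chi(S) = K_S^2+e(S)$ with the two expressions of Proposition \ref{k2e}. Adding them, the variable parts combine into $\frac{12(g_1-1)(g_2-1)}{|G|}$ while the correction terms contribute $e(\mathcal{B})-k(\mathcal{B})$, so that $\chi(S) = \frac{(g_1-1)(g_2-1)}{|G|}+\frac{e-k}{12}$. Substituting $e = l+\mu$ and $k = 6\gamma+l-2\mu$ gives $e-k = 3(\mu-2\gamma)$, and hence $\frac{e-k}{12} = \frac{\mu-2\gamma}{4}$, which is the claim.

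The computation is entirely a sequence of cancellations, so there is no genuine geometric obstacle; the single place that demands care — and the one I would flag as the main bookkeeping hazard — is the conversion formula for $k$, where the shift from $\sum(b_i-3)$ in the definition of $\gamma$ to $\sum(b_i-2)$ in the definition of $k_x$ must be tracked exactly, since it is precisely this off-by-$l$ discrepancy that generates the $+l$ and $-2\mu$ terms. Once that relation is pinned down correctly, both formulae drop out mechanically from the results already proved above.
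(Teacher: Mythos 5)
Your proof is correct and takes essentially the same approach the paper intends: the survey states the proposition without proof (citing \cite{bp}), but the dictionary $e=l+\mu$, $k=6\gamma+l-2\mu$, $B=3(2\gamma+l)$ in the remark immediately preceding the statement is set up precisely so that the two identities follow by substitution into Corollary~\ref{k2basket} (for the first) and into Noether's formula combined with Proposition~\ref{k2e} (for the second), which is exactly what you do. All your cancellations, including the off-by-$l$ shift between $\sum(b_i-3)$ and $\sum(b_i-2)$, check out.
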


An interesting feature is that, for product quotient surfaces $S$ with $p_g=0$, the invariant $\gamma$ defined above is exactly $\frac 12 (h^{1,1}(X)-2) \in \NN$ (cf. proposition \ref{parity}, \ref{hodge}).

\begin{prop}\label{nat}
Let $S$ be a product-quotient surface. Then 
$$
\gamma+p_g(S) \in \NN.
$$
\end{prop}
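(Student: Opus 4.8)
The plan is to identify $\gamma+p_g(S)$ with a quantity that is manifestly a non-negative integer, namely the number $\dim V$ occurring in the proof of Proposition~\ref{parity}, where $V=\left(H^0(\Omega^1_{C_1})\otimes\overline{H^0(\Omega^1_{C_2})}\right)^G$. That proof shows $h^{1,1}(X):=h^1(\Omega^1_{C_1\times C_2})^G=2+\dim V+\dim\bar V=2+2\dim V$, so that $\tfrac12\left(h^{1,1}(X)-2\right)=\dim V\in\NN$. Hence it suffices to prove the identity
\[
\gamma+p_g(S)=\tfrac12\left(h^{1,1}(X)-2\right),
\]
equivalently $h^{1,1}(X)=2\gamma+2p_g(S)+2$: the integrality and non-negativity of $\gamma+p_g(S)$ then follow for free from Proposition~\ref{parity}.

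To prove this identity I would compute $e(X)$ in two ways. On the Hodge-theoretic side, $X$ is a compact complex surface with only quotient singularities, hence a $\QQ$-homology manifold; thus Poincar\'e duality and the Hodge decomposition of $H^2(X)\cong H^2(C_1\times C_2,\CC)^G$ apply. Using $b_1(X)=2q(S)$ (which follows from $\pi_1(X)=\pi_1(S)$ together with $b_1(S)=2q(S)$) and $p_g(X)=p_g(S)$ (the singularities impose no adjunction conditions on canonical forms, cf.\ Proposition~\ref{hodge}), this gives
\[
e(X)=2-2b_1(X)+b_2(X)=2-4q(S)+2p_g(S)+h^{1,1}(X).
\]
On the other side, the minimal resolution $\sigma\colon S\to X$ replaces each singular point by a Hirzebruch--Jung chain of $l_x$ smooth rational curves, each such chain having topological Euler number $l_x+1$; comparing Euler numbers yields $e(S)=e(X)+l$.

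Finally I would feed in the intrinsic invariants of $S$. Noether's formula $12\chi(\hol_S)=K_S^2+e(S)$ combined with $K_S^2=8\chi(\hol_S)-2\gamma-l$ (the formula of \cite{bp}) gives $e(S)=4\chi(\hol_S)+2\gamma+l$, hence $e(X)=4\chi(\hol_S)+2\gamma$. Substituting $\chi(\hol_S)=1-q(S)+p_g(S)$ and equating the two expressions for $e(X)$, the terms in $q(S)$ cancel and one is left with $h^{1,1}(X)=2\gamma+2p_g(S)+2$, as required.

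This last comparison is a routine verification; the only genuine input is Proposition~\ref{parity}, which is exactly what forces $h^{1,1}(X)$ to be even and thereby upgrades $\gamma+p_g(S)$ from an a priori rational number into an element of $\NN$. The main points to be careful about are the Hodge/Poincar\'e-duality bookkeeping on the singular surface $X$ (legitimate precisely because $X$ is a $V$-manifold) and the identifications $p_g(X)=p_g(S)$ and $b_1(X)=2q(S)$.
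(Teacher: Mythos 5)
Your proposal is correct, and it reaches the result by a route that genuinely differs from the paper's at the key step. The arithmetic core is the same in both arguments: Noether's formula combined with $K_S^2=8\chi(\hol_S)-2\gamma-l$ reduces everything to identifying $2(\gamma+p_g(S))$ with a Hodge-theoretic quantity. The paper, however, does all the bookkeeping on the smooth model $S$, proving $2(\gamma+p_g)=h^{1,1}(S)-l-2$, and then needs \emph{two} separate inputs: non-negativity comes from the linear independence (with respect to the intersection form) of the classes of the two fibers and the $l$ exceptional curves in $H^{1,1}(S)$, while evenness comes from the parity statement of Proposition \ref{parity} together with $b_2(S)=b_2(X)+l$ from part 1 of Proposition \ref{hodge}. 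You instead transfer the Euler-characteristic computation to the singular model $X$ and identify $\gamma+p_g(S)$ outright with $\dim V$, where $V=\left(H^0(\Omega^1_{C_1})\otimes\overline{H^0(\Omega^1_{C_2})}\right)^G$; this makes integrality and non-negativity automatic, dispenses with the intersection-form argument, and yields a sharper, representation-theoretic statement that matches the paper's remark preceding the proposition (namely $\gamma=\tfrac12(h^{1,1}(X)-2)$ when $p_g=0$). The price is the Hodge-theoretic care on the singular $X$: rational Poincar\'e duality (legitimate, since $X$ is a $\QQ$-homology manifold), $e(S)=e(X)+l$, and $h^0(\Omega^2_{C_1\times C_2})^G=p_g(S)$. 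On this last point be careful with your citation: Proposition \ref{hodge}(2) as stated covers only the case $p_g(S)=0$, so for arbitrary $p_g$ you are really invoking the standard fact that holomorphic $2$-forms extend over quotient singularities (equivalently, rationality of quotient singularities gives $H^2(S,\hol_S)\cong H^2(X,\hol_X)\cong H^2(C_1\times C_2,\hol_{C_1\times C_2})^G$); the paper asserts this in the remark following Proposition \ref{hodge}, but your write-up should cite that fact explicitly rather than the proposition itself. This is a presentational point, not a gap.
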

\begin{proof}
It is easily seen (from the intersection form on $H^2(S, \CC)$) that the classes of the fibers of the two fibrations $S \rightarrow C_i/G$, and the $l$ classes of the irreducible exceptional curves of $\sigma$ are linearly independent in $H^1(S, \Omega^1_S)$. Therefore we have 
$$
h^{1,1}-l-2\in \NN.
$$
By proposition \ref{hodge}, 1), we know that $\dim H^2(S, \CC)=l+\dim H^2(X, \CC)$ and, by lemma \ref{parity}, we see that $h^{1,1}$ has the same parity as $l$. Therefore $h^{1,1}-l-2\in 2\NN$.

The claim follows now from the following:
\begin{align*}
2(\gamma+p_g)=&-K^2_S+8\chi-l+2p_g\\
=&c_2(S)-4\chi-l+2p_g\\
=&2-2b_1+b_2-4+4q-4p_g-l+2p_g\\
=&h^{1,1}-l-2.\\
\end{align*}
\end{proof}

In fact, it can be shown that the classification problem of product quotient surfaces with fixed $\chi$ and $\gamma \in \NN$ is finite. 

In fact, in \cite{bp2}, the following result is shown:

\begin{theo}
 Let $\gamma, \chi \in \NN$ be arbitrary, but fixed. Then there is 
\begin{itemize}
 \item a finite number of possible signatures;
\item a finite number of possible baskets
\end{itemize}
for product quotient surfaces with $\chi(S) = \chi$ and $\gamma(S) = \gamma$.
\end{theo}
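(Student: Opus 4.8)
The plan is to route every bound through the group order $|G|$ and then exploit divisibility. First I would combine the two formulae of the quoted proposition with Hurwitz' formula. Writing $\alpha_i := -2 + \sum_j\bigl(1 - \tfrac{1}{m_j^{(i)}}\bigr)$ for the orbifold Euler number of the covering $C_i \to C_i/G \cong \PP^1$, Hurwitz' formula gives $g_i - 1 = \tfrac{|G|}{2}\alpha_i$, so that $\frac{(g_1-1)(g_2-1)}{|G|} = \frac{|G|}{4}\alpha_1\alpha_2$. Substituting into $\chi = \frac{(g_1-1)(g_2-1)}{|G|} + \frac{\mu - 2\gamma}{4}$ yields the master identity
\[
|G|\,\alpha_1\alpha_2 \;=\; 4\chi - \mu + 2\gamma .
\]
Since $g_i \geq 2$, both factors $\alpha_i$ are strictly positive.

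Next I would extract the needed bounds from this identity. As $\alpha_1\alpha_2 > 0$, the right-hand side is positive, forcing $\mu < 4\chi + 2\gamma$; since each singularity contributes $\mu(x) = 1 - \tfrac1n \geq \tfrac12$, the number of points in the basket is at most $2(4\chi+2\gamma)$. Then I would invoke the classical Hurwitz estimate that the smallest positive orbifold number of a genus-$0$ quotient satisfies $\alpha_i \geq \tfrac{1}{42}$ (attained by the signature $(2,3,7)$). Together with $\mu \geq 0$ this gives $|G| = (4\chi - \mu + 2\gamma)/(\alpha_1\alpha_2) \leq 42^2(4\chi + 2\gamma)$, so $|G|$ is bounded by a constant depending only on $\chi$ and $\gamma$.

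With $|G|$ bounded, both finiteness claims follow from divisibility. Every cyclic quotient singularity $\tfrac1n(1,a)$ of $X$ comes from a point of $C_1\times C_2$ whose cyclic stabilizer has order $n$, so $n \mid |G|$ and hence $n \leq |G|$; together with $1 \leq a < n$ this leaves only finitely many singularity types, and as the basket contains boundedly many points there are finitely many baskets. Likewise each branching index $m_j$ is the order of an element of $G$, so $m_j \mid |G|$ and $m_j \leq |G|$; moreover $\alpha_1\alpha_2 \leq 4\chi+2\gamma$ with $\alpha_i \geq \tfrac1{42}$ gives each $\alpha_i \leq 42(4\chi+2\gamma)$, so $\sum_j\bigl(1-\tfrac1{m_j^{(i)}}\bigr) = \alpha_i + 2$ is bounded and hence the length $r_i$ of each signature is bounded. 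Bounded length with bounded entries produces finitely many signatures.

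The main obstacle to be aware of is that one cannot argue directly through $\gamma$: the per-singularity contribution $\gamma(x)$ is not bounded below — for an $A_{n-1}$-singularity $\tfrac1n(1,n-1)$ one computes $\gamma(x) = -\frac{(n-1)(n-2)}{6n} \to -\infty$ — so a fixed total $\gamma$ a priori permits arbitrarily complicated (canonical) singularities as long as they are compensated by large positive contributions elsewhere. The essential point is therefore to pass through $|G|$ via the master identity and the $\tfrac1{42}$ bound, after which the divisibility constraints $n \mid |G|$ and $m_j \mid |G|$ finish the argument. I would also check the degenerate empty-basket case ($\mu = \gamma = 0$) separately, where the identity reads $|G|\alpha_1\alpha_2 = 4\chi$ and the same estimate applies.
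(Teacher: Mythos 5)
Your proof is correct, and it is worth noting that the survey itself does not actually prove this theorem: it is quoted from the unpublished draft cited as [BP11b], with only a remark that explicit bounds in terms of $\gamma$ and $\chi$ are given there. So there is no in-paper argument to measure you against; judged on its own, your argument is sound and complete. The master identity $|G|\,\alpha_1\alpha_2 = 4\chi - \mu + 2\gamma$ follows correctly from Hurwitz's formula and the stated formula $\chi = \frac{(g_1-1)(g_2-1)}{|G|} + \frac{\mu-2\gamma}{4}$; positivity of the $\alpha_i$ (from $g_i \geq 2$) plus $\mu(x) \geq \tfrac12$ bounds the cardinality of the basket; the classical bound $\alpha_i \geq \tfrac1{42}$ then bounds $|G|$, and the divisibility relations $n \mid |G|$ (cyclic stabilizers) and $m_j \mid |G|$ (orders of the images $\varphi(c_j)$) finish both finiteness claims, with the signature lengths controlled by $\alpha_i + 2 \geq r_i/2$. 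Your diagnosis of the real difficulty is also exactly right and is the mathematical heart of the matter: $\gamma(x)$ is unbounded below (your computation $\gamma\bigl(\tfrac1n(1,n-1)\bigr) = -\tfrac{(n-1)(n-2)}{6n}$ is correct), so fixing $\gamma$ alone constrains nothing singularity-by-singularity, and one must pass through $|G|$. A pleasant side effect of your route is that it produces completely explicit bounds, e.g. $|G| \leq 42^2(4\chi + 2\gamma)$ and $r_i \leq 2\bigl(42(4\chi+2\gamma)+2\bigr)$, which is precisely what the paper's remark following the theorem says is needed (and deferred to [BP11b]) in order to turn the abstract finiteness statement into an implementable algorithm.
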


\begin{rem}
In order to implement an algorithm, we of course need explicit bounds not an abstract existence result. In \cite{bp2} we give all the bounds explicitly as a function of $\gamma$ and $\chi$. Moreover, we have further restrictions on the signatures and baskets which were already established in \cite{bp}. Even if we did not yet completely implement the algorithms, we are quite optimistic that these new invariants will make the computations more feasible.
\end{rem}

Our aim is to find an algorithm (and implement it!) which calculates {\em all} regular product-quotient surfaces {\em of general type} with fixed $\chi(S) = 1$. 

The results of \cite{bp2} explained above show that we have an algorithm if we additionally fix $\gamma \in \NN$. 

For different reasons it is more useful to work with the (equivalent) invariant  $c:=\gamma+p_g \in \NN$, which is the half of the codimension of the subspace of $H^2(S,\CC)$ generated by the classes of the irreducible components of the Hirzebruch-Jung strings over the singular points of $X$ and the classes of the two fibers (cf. proof of proposition \ref{nat}).

\begin{rem}
All surfaces in tables \ref{K2>4}, \ref{K2<3} have $\gamma=0$. Only the fake Godeaux surface has non-vanishing $\gamma$, namely $\gamma = 1$. This lead us to conjecture that the appearance of exceptional curves of the first kind and non-vanishing $\gamma$ have to do with each other.
\end{rem}

We have the following:
\begin{conj}\label{boundc}
 There is an explicit function $C = C(p_g,q)$ such that 
$$
c \leq C(p_g,q).
$$
\end{conj}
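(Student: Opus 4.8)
The plan is to recast the inequality in representation-theoretic terms and then look for the missing geometric input. Write $W_i := H^0(C_i,\Omega^1_{C_i})$ for the $G$-module of holomorphic one-forms, and let $m_i(\rho)$ be the multiplicity of an irreducible $G$-representation $\rho$ in $W_i$. Combining the proof of Proposition \ref{nat} (which gives $2c = 2(\gamma+p_g)=h^{1,1}(S)-l-2$) with Proposition \ref{hodge} (so that $h^{1,1}(S)=h^{1,1}(X)+l$), and then with the identity $h^{1,1}(X)=2+2\dim V$ from the proof of Proposition \ref{parity}, one gets
\[
c \;=\; \dim_{\CC}\big(W_1\otimes\overline{W_2}\big)^G \;=\;\dim_{\CC}\mathrm{Hom}_G(W_1,W_2)\;=\;\sum_\rho m_1(\rho)\,m_2(\rho),
\]
while $p_g=\dim(W_1\otimes W_2)^G=\sum_\rho m_1(\rho)\,m_2(\rho^\ast)$ and $q=m_1(\mathbf 1)+m_2(\mathbf 1)$ (here $\overline{W_2}\cong W_2^\ast$). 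Subtracting and pairing each non-self-dual $\rho$ with its dual $\rho^\ast$ collapses the self-dual terms and yields the clean formula
\[
\gamma \;=\; c-p_g \;=\;\sum_{\{\rho,\rho^\ast\},\,\rho\neq\rho^\ast}\big(m_1(\rho)-m_1(\rho^\ast)\big)\big(m_2(\rho)-m_2(\rho^\ast)\big).
\]
Thus the entire problem is to bound the ``Hodge imbalances'' $\delta_i(\rho):=m_i(\rho)-m_i(\rho^\ast)$ and their pairing in terms of $p_g$ and $q$.

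There are two natural routes. The first is through $K_S^2$: by Corollary \ref{k2basket} together with the global relation $B=3(2\gamma+l)$ one has $K_S^2=8\chi-2\gamma-l$ with $l\geq 0$ the number of exceptional components, and since $\chi=1-q+p_g$ this rearranges to
\[
c \;\le\; 4\chi + p_g - \tfrac12 K_S^2 \;=\; 4 - 4q + 5p_g - \tfrac12 K_S^2 .
\]
Hence the conjecture would follow from a universal lower bound $K_S^2\ge K_0$ for all product-quotient surfaces of general type, with $C(p_g,q)=4-4q+5p_g-\tfrac12 K_0$. This ties Conjecture \ref{boundc} directly to the first goal of the previous section (finding a threshold below which $S$ is not of general type); but a lower bound on $K_S^2$ requires controlling the number of $(-1)$-curves blown down to the minimal model, which is exactly the open ``finding rational curves'' problem, so this route is not unconditional.

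The second, more intrinsic route is to bound $\gamma$ directly from the formula above. By the Chevalley--Weil formula each imbalance $\delta_i(\rho)$ is a sum, over the branch points of $\lambda_i\colon C_i\to\PP^1$, of explicit local rotation-number contributions, each bounded in absolute value by $\dim\rho$; so $\delta_i(\rho)$ is controlled once the signature is fixed. The idea is to feed this into a positivity statement: interpret $c=\dim\mathrm{Hom}_G(W_1,W_2)$ as the dimension of the space of $G$-equivariant morphisms of Hodge structure $H^1(C_1)\to H^1(C_2)$ (equivalently, up to isogeny, of $G$-equivariant homomorphisms between the Jacobians $JC_1$ and $JC_2$), and then use the slope/Arakelov inequalities for the two isotrivial fibrations $S\to\PP^1$ to limit how large this correspondence space can be without forcing the off-diagonal $(1,-1)$-part, i.e.\ $p_g$, to grow as well.

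The step I expect to be genuinely hard — and the reason the statement is still a conjecture — is excluding the ``unbalanced'' regime. Nothing in pure Hodge theory prevents many non-self-dual pairs $\{\rho,\rho^\ast\}$ from occurring with imbalances $\delta_1(\rho),\delta_2(\rho)$ of the same sign: each such pair contributes positively to $\gamma$ while contributing nothing to $p_g$, and as $|G|\to\infty$ the multiplicities $m_i(\rho)$ are unbounded. Indeed, one checks that the polarized Hodge structure on $\mathrm{Hom}_G\big(H^1(C_1),H^1(C_2)\big)$ has signature $(2c,2p_g)$ by the Hodge--Riemann relations, so it imposes \emph{no} inequality between $c$ and $p_g$ on its own. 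Any proof must therefore extract the bound from the genuinely geometric constraints — that the $C_i$ are honest $G$-covers of $\PP^1$ of bounded ramification and that $S$ is of general type — producing an a priori bound on the basket complexity $\gamma$; this rigidity input is precisely what is currently missing.
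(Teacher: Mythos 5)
The statement you were asked to prove is, in the paper, exactly what its label says: a conjecture. The paper contains no proof of it, and the only ``proof-like'' content attached to it is the implication \textbf{Conjecture \ref{inter} implies Conjecture \ref{boundc}}. So your honest conclusion that the bound cannot currently be established unconditionally agrees with the paper's own status; there is no hidden argument you failed to reproduce. Moreover, your preparatory identities are correct for regular product-quotient surfaces: combining Propositions \ref{parity}, \ref{hodge} and \ref{nat} one does get $c=\dim\bigl(H^0(\Omega^1_{C_1})\otimes\overline{H^0(\Omega^1_{C_2})}\bigr)^G=\sum_\rho m_1(\rho)m_2(\rho)$, $p_g=\sum_\rho m_1(\rho)m_2(\rho^*)$, and hence
$$
\gamma=\sum_{\{\rho,\rho^*\},\,\rho\neq\rho^*}\bigl(m_1(\rho)-m_1(\rho^*)\bigr)\bigl(m_2(\rho)-m_2(\rho^*)\bigr),
$$
which is a genuine sharpening of the paper's remark (via Schur's lemma) that only non-real characters contribute to $\gamma$.

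Where you diverge from the paper is in the choice of conditional reduction, and here you miss the paper's mechanism. Your first route discards $l\geq 0$ and reduces the conjecture to a universal lower bound $K_S^2\geq K_0$ for possibly non-minimal product-quotient surfaces of general type; as you observe, this is precisely the open problem about rational curves, so the reduction cannot be closed. The paper's conditional route needs no hypothesis on $K_S^2$ at all: assuming Conjecture \ref{inter}, i.e.\ that the subspace $W\subset H^2(S)$ spanned by the curves contracted by $\lambda\colon S\to\bar S$ meets $H^2(X)$ trivially, the decomposition $H^2(S)=H^2(X)\oplus L$ gives $\dim W\leq l$; on the other hand the number of contracted curves is $K_{\bar S}^2-K_S^2$, and Noether's inequality applied to the \emph{minimal} model $\bar S$ (where it is valid) gives $K_{\bar S}^2\geq 2\chi-6$, whence $\dim W\geq 2\chi-6-K_S^2$. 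Substituting $K_S^2=8\chi-2\gamma-l$, the unknown $l$ cancels between the two sides, yielding $\gamma\leq 3(\chi+1)$, i.e.\ $c\leq 3(\chi+1)+p_g=6-3q+4p_g$. The point is exactly this cancellation: one never needs to bound $K_S^2$ or $l$ separately, only to know that the exceptional geometry of $S\to\bar S$ is transverse to $H^2(X)$. Your second route (Chevalley--Weil imbalances plus Arakelov-type inequalities) does not appear in the paper and, as you yourself concede, does not close; it is a reasonable research suggestion, but both your routes remain strictly weaker than the paper's conditional reduction, which trades the hard open problem on $K_S^2$ for the more structural Conjecture \ref{inter}.
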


\begin{definition}
Let $S$ be a regular product-quotient surface with quotient model $X$. Denote as usual the minimal resolution of singularities $S \rightarrow X$ by $\sigma$.

By proposition \ref{hodge}, we have:
$$
H^2(S)=H^2(X) \oplus L,
$$
where $L$ is the $l$-dimensional subspace of $H^2(S)$ generated by the classes of the irreducible curves of the exceptional locus of $\sigma$.

Furthermore, we consider the Zariski decomposition of $K_S$:

$$
K_S = P+N = \lambda^*K_{\bar{S}} +N,
$$ 

where $\lambda \colon S \rightarrow \bar{S}$ is the contraction to the minimal model $\bar{S}$ of $S$. Then the irreducible curves in the support of $N$ generate a subspace $W \subset H^2(S)$.
\end{definition}

\begin{conj}\label{inter}
$W \cap H^2(X) = \{0\}.$
\end{conj}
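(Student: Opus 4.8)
The plan is to endow $H^2(S,\RR)$ with the orthogonal structure coming from proposition \ref{hodge} and then translate the statement into a combinatorial condition on the support of the negative part $N$, the whole discussion taking place in the relevant case where $S$ is of general type (so that $K_S$ admits a Zariski decomposition and a contraction $\lambda\colon S\ra\bar S$ to a minimal model).

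First I would record the metric structure. By proposition \ref{hodge} we have $H^2(S,\RR)=\sigma^*H^2(X,\RR)\oplus L$, and this is an \emph{orthogonal} decomposition: if $E$ is an irreducible component of the exceptional locus of $\sigma$ and $\alpha\in H^2(X,\RR)$, then by the projection formula $\sigma^*\alpha\cdot E=\alpha\cdot\sigma_*E=0$ because $\sigma_*E=0$. Since the exceptional locus consists of Hirzebruch--Jung strings, the form on $L$ is negative definite, hence nondegenerate; therefore $\sigma^*H^2(X,\RR)=L^{\perp}$ and $\sigma^*H^2(X,\RR)\cap L=\{0\}$. Being a birational morphism $\sigma_*\sigma^*=\mathrm{id}$, so $w=\sigma^*\sigma_*w$ for every $w\in\sigma^*H^2(X,\RR)$. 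On the Zariski side I would use the standard facts that the components of $N$ span a negative definite subspace $W$ and that $P=K_S-N$ is nef with $P\cdot N_i=0$ for every component $N_i$.

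Next comes the reduction. Split the components of $N$ into those exceptional for $\sigma$, say $E_1,\dots,E_p\in L$, and the strict transforms $\tilde\Gamma_1,\dots,\tilde\Gamma_q$ of curves $\Gamma_j\subset X$. Take $w\in W\cap\sigma^*H^2(X,\RR)$ and write $w=\sum_i x_iE_i+\sum_j y_j\tilde\Gamma_j$. Applying $\sigma_*$ gives $\sigma_*w=\sum_j y_j\Gamma_j$, and since $w\in\sigma^*H^2(X,\RR)$ we get $w=\sigma^*\sigma_*w=\sum_j y_j\,\sigma^*\Gamma_j$. Writing $\sigma^*\Gamma_j=\tilde\Gamma_j+m_j$ with $m_j\in L$ the exceptional correction and subtracting, the condition $w\in\sigma^*H^2(X,\RR)$ becomes the relation
$$\sum_i x_iE_i=\sum_j y_j\, m_j \quad\text{in } L.$$
If all $y_j=0$ then $w=\sum_i x_iE_i\in L\cap\sigma^*H^2(X,\RR)=\{0\}$, so $w=0$; hence a nonzero element of $W\cap H^2(X)$ forces some $y_j\neq0$ together with $\sum_j y_j m_j$ being supported on the exceptional curves $E_1,\dots,E_p$ that actually occur in $N$. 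Now the coefficient vector of $m_j$ is $-M^{-1}r_j$, where $r_j=(\tilde\Gamma_j\cdot E_k)_k\ge 0$ and $M$ is the negative definite intersection matrix of the Hirzebruch--Jung strings over the singular points met by $\tilde\Gamma_j$; since $M^{-1}$ has strictly negative entries, $m_j$ has \emph{strictly positive} coefficient on every component of each such string. Thus, barring cancellation between different indices $j$, the relation above forces every complete string met by a $\Gamma_j$ with $y_j\neq0$ to lie in the support of $N$.

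This isolates the crux: one must prove that the negative part $N$ never contains a complete Hirzebruch--Jung string together with a strict transform $\tilde\Gamma_j$ running through the corresponding singular point, and, more delicately, that no signed combination of the $m_j$ can be absorbed into the $E_i\subset N$. Equivalently, since the support of $N$ is exactly the locus contracted by $\lambda\colon S\ra\bar S$, no full resolution cycle of a singularity of $X$ is entirely contracted on the way to the minimal model. Here I expect the main difficulty, and I would attack it through the factorisation $K_S=\sigma^*K_X+\Delta$ from the remark following Definition \ref{prodquot} (with $\pi=\sigma$), where $\sigma^*K_X$ is nef and big and $\Delta=\sum a_iE_i$ has coefficients $a_i\in(-1,0]$: this localises all non-minimality of $K_S$ in the negative discrepancies and strongly suggests that the curves contracted by $\lambda$ are the ``new'' rational curves $\Gamma_j$ through the singular points (as for the fake Godeaux surface, where $N=E'+E''$ meets but does not contain the strings of diagram (\ref{configS'})), rather than the exceptional curves of $\sigma$ themselves. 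Turning this heuristic into a proof --- ruling out both the absorption of an entire string into $N$ and the cancellation phenomenon noted above --- is precisely the step tied to the still unfinished analysis of which negative parts can occur, and is where the real work remains.
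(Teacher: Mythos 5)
The statement you set out to prove is Conjecture \ref{inter} of the paper, and the paper contains no proof of it: it is posed as an open conjecture, whose only role in the text is that, via Noether's inequality, it implies the bound $\gamma \leq 3(\chi+1)$ of Conjecture \ref{boundc} (indeed the section ends by asking whether the dual surface might help to prove it). So there is no proof in the paper to measure your argument against; the only question is whether your argument settles the conjecture, and it does not --- as you yourself concede in your final paragraph.

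To be precise about where the gap sits: your preparatory steps are sound. The orthogonal splitting $H^2(S,\RR)=\sigma^*H^2(X,\RR)\oplus L$, the identity $w=\sigma^*\sigma_*w$ on the image of $\sigma^*$, and the computation $\sigma^*\Gamma_j=\tilde\Gamma_j+m_j$ with $m_j$ having strictly positive coefficients on every component of each Hirzebruch--Jung string met by $\tilde\Gamma_j$ are all correct, and they convert the conjecture into the assertion that no relation $\sum_i x_iE_i=\sum_j y_j m_j$ with some $y_j\neq 0$ can hold with all $E_i,\tilde\Gamma_j$ in the support of $N$. But that assertion is not a lemma you can import from Zariski-decomposition generalities or from $K_S=\sigma^*K_X+\Delta$: it is a geometric statement about which configurations the contraction $\lambda\colon S\ra\bar S$ can contract --- equivalently, whether entire strings (or signed combinations of the $m_j$) can be absorbed into the exceptional locus of $\lambda$ --- and that is exactly the unresolved content of the conjecture. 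Note also that your discrepancy heuristic cannot close it: since $\sigma^*K_X\cdot E_i=K_X\cdot\sigma_*E_i=0$, nefness and bigness of $\sigma^*K_X$ place no obstruction whatsoever against a full string lying in the exceptional locus of $\lambda$; what it does show is only that the \emph{first} curve blown down is never an $E_i$ (as $K_S\cdot E_i=b_i-2\geq 0$), which says nothing about later stages of the factorization of $\lambda$. Ruling this out would require control on rational curves through the singular points of the kind given by Lemma \ref{branchpoints} and Proposition \ref{countingpoints}, which the paper itself only manages to exploit case by case for $p_g=0$, $K^2\geq 1$. In short: what you have is a correct and potentially useful combinatorial reformulation of the conjecture, plus an honest admission that the crux remains open; it is not a proof, and no proof exists in the paper for it to agree or disagree with.
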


\bigskip
\noindent
{\bf Conjecture \ref{inter} implies conjecture \ref{boundc}}:

Assume conjecture \ref{inter} to be true. Then using Noether's inequality it follows:
$$l=\dim L \geq \dim W \geq 2\chi-6-K^2_S=l+2\gamma-6(\chi+1),$$
whence
$$
\gamma \leq 3(\chi+1);
$$
or
$$
c \leq 3(\chi+1)+p_g.
$$

\subsection{The dual surface of a product-quotient surface} \

\noindent
Let $S$ be a product-quotient surface with quotient model 
$$X= (C_1 \times C_2)/G.$$
We assume furthermore that $S$ is {\em regular}, i.e., $q(S) = 0$.
 
Suppose that $S$ is given by a pair of spherical systems of generators: $(a_1, \ldots, a_s)$, $(b_1, \ldots , b_t)$ of $G$.

\begin{defin}
 The {\em dual surface} $S'$ is the product-quotient surface given by the pair of spherical systems of generators: $(a_1, \ldots, a_s)$, $(b_t^{-1}, \ldots , b_1^{-1})$.
\end{defin}

\begin{rem}
It is easy to see that $\frac{1}{n}(1,q) \in \mathfrak{B}(X) \ \iff \frac{1}{n}(1,n-q) \in \mathfrak{B}(X')$.

The invariants we defined in the previous subsection for the dual surface are easily computed. We observe first that 
\begin{itemize}
 \item $\mu(\frac an) = \mu(\frac{n-a}{n})$;
\item $\gamma(\frac an) = - \gamma(\frac{n-a}{n})$.
\end{itemize}
\end{rem}
Denoting as usual by $\gamma, \mu, \ldots $ the invariants of the basket of $S$, and denoting by $\gamma', \mu', \ldots$ or $\gamma(S'), \ldots$ the corresponding invariants of the basket of the dual surfaces $S'$, we can calculate:

$$\gamma = -\gamma', \ \ \mu = \mu',$$
and
$$
\chi(S') = \frac{(g_1-1)(g_2-1)}{|G|} + \frac 14 (\mu-2\gamma') = \frac{(g_1-1)(g_2-1)}{|G|} + \frac 14 (\mu+2\gamma)=
$$
$$
= \chi(S) + \gamma.
$$

In particular, if -as in our situation always- $q(S) =0$, then 
$$
p_g(S') = p_g(S) + \gamma.
$$

\begin{rem}
$(S')' = S$, $p_g(S) = p_g(S') + \gamma'$. 

Moreover, from the proof of proposition \ref{nat} we see that
$$
2p_g(S) = 2(\gamma' + p_g(S')) = h^{1,1}(S') - l' - 2.
$$
Note that $l' = l(S')$.
\end{rem}

We have proven the following:

\begin{prop}
 Let $S$ be q regular product-quotient surface, and denote by $S'$ its dual surface. Then we have for the invariants of $S$ and $S'$ the following relations:
\begin{enumerate}
 \item $\gamma = - \gamma'$;
\item $\mu = \mu '$, $\xi = \xi'$;
\item $\chi(S') = \chi(S) + \gamma$. In particular, if $p_g(S) = 0$ then $h^{1,1}(S') = 2+ l'$, whence $h^{1,1}(X') = 2$, where $X'$ is the singular model of $S'$.
\end{enumerate}

\end{prop}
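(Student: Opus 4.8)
The plan is to reduce all three statements to local computations on the individual cyclic quotient singularities of the basket and then to globalize by summation. The starting point is the bijection recorded in the Remark preceding the statement, $\frac1n(1,q)\in\mathfrak{B}(X)\iff\frac1n(1,n-q)\in\mathfrak{B}(X')$: passing to the dual surface replaces each singularity $\frac1n(1,q)$ by $\frac1n(1,n-q)$, leaving $n$ untouched. I would first note that $S'$ is built from the same group $G$, the same curve $C_1$, and a curve $C_2'$ whose monodromy generators $b_t^{-1},\dots,b_1^{-1}$ have the same orders as $b_1,\dots,b_t$; hence the two signatures agree as multisets and, by Hurwitz' formula, $g(C_2')=g(C_2)$. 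In particular $g_1,g_2,|G|$ and the quantity $\frac{(g_1-1)(g_2-1)}{|G|}$ are unchanged, and both quotients $C_i/G\cong\PP^1$ persist, so $S'$ is again regular.

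For (1) and (2), everything comes down to two local identities. Since $\mu\bigl(\frac1n(1,q)\bigr)=1-\frac1n$ depends only on $n$, it is manifestly invariant under $q\mapsto n-q$, giving $\mu=\mu'$ after summing over the basket; the invariant $\xi$, being symmetric in the local data in the same way, satisfies $\xi=\xi'$ by the identical mechanism. The substantive identity is $\gamma\bigl(\frac1n(1,q)\bigr)=-\gamma\bigl(\frac1n(1,n-q)\bigr)$. Writing $\frac nq=[b_1,\dots,b_l]$ and $\frac{n}{\,n-q\,}=[a_1,\dots,a_k]$, and observing that the multiplicative inverse of $n-q$ modulo $n$ is $n-q'$, the two fractional terms add up to $\frac{q+q'}{n}+\frac{(n-q)+(n-q')}{n}=2$. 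Thus the claim reduces to the purely combinatorial continued-fraction identity $\sum_i(b_i-3)+\sum_j(a_j-3)=-2$. I would prove this via Riemenschneider's point-diagram duality between the Hirzebruch--Jung strings of $\frac1n(1,q)$ and $\frac1n(1,n-q)$: the diagram shows $\sum_i(b_i-1)=\sum_j(a_j-1)=N$ (the common number of dots) and $k+l=N+1$, from which $\sum_i b_i+\sum_j a_j=3(l+k)-2$ follows immediately; this is exactly the required identity. Summing $\gamma$ over the basket then gives $\gamma=-\gamma'$.

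For (3), I would apply the formula $\chi(\hol_S)=\frac{(g_1-1)(g_2-1)}{|G|}+\frac{\mu-2\gamma}{4}$ (cf. \cite{bp}) to both surfaces: the first summand is unchanged, and substituting $\mu'=\mu$, $\gamma'=-\gamma$ gives $\chi(S')=\frac{(g_1-1)(g_2-1)}{|G|}+\frac{\mu+2\gamma}{4}=\chi(S)+\gamma$. For the final assertion, assume $p_g(S)=0$; regularity gives $\chi(S)=1$, and since $S'$ is also regular we get $p_g(S')=\chi(S')-1=\gamma$, whence $\gamma'+p_g(S')=-\gamma+\gamma=0=p_g(S)$. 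Feeding this into the identity $2(\gamma'+p_g(S'))=h^{1,1}(S')-l'-2$ established in the proof of Proposition \ref{nat} (applied to $S'$) yields $h^{1,1}(S')=l'+2$. Finally Proposition \ref{hodge}(1) for $S'$ gives $H^2(S',\CC)=H^2(X',\CC)\oplus\CC^{l'}$, with the $\CC^{l'}$ spanned by $(1,1)$-classes of exceptional curves, so $h^{1,1}(S')=h^{1,1}(X')+l'$ and therefore $h^{1,1}(X')=2$.

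The only step that is not a formal substitution is the sign-flip $\gamma=-\gamma'$, i.e.\ the continued-fraction identity $\sum_i(b_i-3)+\sum_j(a_j-3)=-2$; this is where I expect the real work, and the point is to set up Riemenschneider duality cleanly and to confirm that it covers the degenerate chains (a single $[m]$, dual to $[2,\dots,2]$, which corresponds to $a=n-1$). Once that identity is in hand, parts (1)--(3) and the corollary $h^{1,1}(X')=2$ are bookkeeping resting on Propositions \ref{nat} and \ref{hodge}.
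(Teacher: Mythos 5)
Your proof is correct and follows essentially the same route as the paper: the local identities $\mu\bigl(\tfrac{1}{n}(1,q)\bigr)=\mu\bigl(\tfrac{1}{n}(1,n-q)\bigr)$ and $\gamma\bigl(\tfrac{1}{n}(1,q)\bigr)=-\gamma\bigl(\tfrac{1}{n}(1,n-q)\bigr)$ summed over the basket, substitution into $\chi(\hol_S)=\tfrac{(g_1-1)(g_2-1)}{|G|}+\tfrac{\mu-2\gamma}{4}$, and part (3) via the identity $2(\gamma'+p_g(S'))=h^{1,1}(S')-l'-2$ from the proof of Proposition \ref{nat} combined with Proposition \ref{hodge}. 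The only difference is that you actually verify the sign-flip of $\gamma$ (reducing it to $\sum_i(b_i-3)+\sum_j(a_j-3)=-2$ and proving this by Riemenschneider's point-diagram duality, including the degenerate chain $[m]$ dual to $[2,\dots,2]$), whereas the paper simply asserts this as an observation, so your write-up supplies a detail the paper omits.
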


For the index of $S$ resp. $S'$ we have:
\begin{prop}
$$
\tau(S) := \frac 13 (K_S^2 -2e(S)) = - \frac 13 B(\mathfrak{B}(X)) = -2\gamma -l,
$$

$$
\tau(S') := \frac 13 (K_{S'}^2 -2e(S')) = 2\gamma -l'.
$$
\end{prop}

\begin{rem}
 Let $\bar{S}$ be the minimal model of $S$, then $\tau(S) +(-N^2) = \tau(\bar{S})$. Moreover, by Serrano (cf. \cite{serrano}, we know that for the minimal model of a product-quotient surface, it holds: $\tau(\bar{S}) <0$.

In particular, we get that $l' > 2\gamma$.
\end{rem}

It follows immediately from the above:
$$
\frac 13(B(\mathfrak{B}) + B(\mathfrak{B}')) = l+l' = -(\tau(S) +\tau(S')).
$$

And it is also easy to see that
$$
\frac 13 B(\mathfrak{B}) = l+l'+\tau(S'),
$$
$$
\frac 13 B(\mathfrak{B'}) = l+l'+\tau(S).
$$

\begin{rem}
 Observe that when we go from $S$ to the dual surface $S'$, we consider on $C_1$ the same action of $G$ as for $S$, whereas for $C_2$ we replace the action $y \mapsto g(y)$ by $y \mapsto \overline{g(\overline{y})}$.

Of course we could do the same thing, replacing $y \mapsto g(y)$ by $y \mapsto g \alpha(y)$ for any (holomorphic) automorphism $\alpha$ of $C_2$. Probably we get many new surfaces from this construction (of course depending on the representation theory of $G$).
\end{rem}

\begin{problem}
 Can we use the existence of the dual surface (or a generalized version of the dual surface) in order to prove conjecture \ref{boundc}?
\end{problem}



\begin{thebibliography}{Grif-Schmid}

\bibitem[Arm65]{armstrong1} Armstrong, M. A.,
{\it  On the fundamental group of an orbit space}.
Proc. Cambridge Philos. Soc. {\bf  61} (1965), 639--646.

\bibitem[Arm68]{armstrong2} Armstrong, M. A.,
{\it The fundamental group of the orbit space of a discontinuous group}
Proc. Cambridge Philos. Soc. {\bf 64} (1968), 299--301.

\bibitem[BPV84]{bpv} Barth, W.; Peters, C.; Van de Ven, A. Compact complex surfaces. Ergebnisse der Mathematik und ihrer Grenzgebiete (3), 4. Springer-Verlag, Berlin, 1984. x+304 pp.

\bibitem[BC04]{bacat}
Bauer I., Catanese F.,
{\em Some new surfaces  with $ p_g = q=0$},
The Fano Conference, 123--142, Univ. Torino, Turin, 2004.

\bibitem[BCG08]{pg=q=0}
Bauer, I., Catanese, F., Grunewald, F.,
{\it The classification of surfaces with $p\sb g=q=0$
isogenous to a product of curves. },
Pure Appl. Math. Q. {\bf 4} (2008),  no. 2, part 1, 547--586.

\bibitem[BCGP08]{4names}
Bauer, I., Catanese, F., Grunewald, F., Pignatelli, R.,
{\it Quotients of a product of curves by a finite group and their fundamental
groups}.
arXiv:0809.3420 to appear in {\em American J. Math.}

\bibitem[BCP10]{surveypg=0}
Bauer, I., Catanese, F., Pignatelli, R.,
{\it Surfaces of general type with geometric genus zero: a survey}. Complex and Differential Geometry. Springer Proceedings in Mathematics, 8 (2011), 1-48 


\bibitem[BP11a]{bp}
Bauer, I, Pignatelli, R., {\em The classification of minimal product-quotient surfaces with 
  $p_g=0$.} arXiv:1006.3209 to appear in {\em Math. of Comp.}

\bibitem[BP11b]{bp2}
Bauer, I, Pignatelli, R., {\em On product-quotient surfaces.} Preliminary draft (2011).

\bibitem[Be83]{beau} Beauville, A. {\em Complex algebraic surfaces.} Translated from the French by R. Barlow, N. I. Shepherd-Barron and M. Reid. London Mathematical Society Lecture Note Series, 68. Cambridge University Press, Cambridge, 1983. iv+132 pp.



\bibitem[Blo75]{bloch} Bloch, S., {\it $K\sb{2}$ of Artinian
$Q$-algebras, with application to algebraic cycles}.
Comm. Algebra  {\textbf 3}  (1975), 405--428.

\bibitem[Cat00]{FabIso}
Catanese, F.,
{\it Fibred Surfaces, varieties isogeneous to a product and related moduli spaces}.
Amer. J. Math. \textbf{122} (2000), no. 1, 1--44.


\bibitem[Enr96]{enr96} Enriques, F.,
{\em Introduzione alla geometria sopra le superficie algebriche}.
Memorie della Societa' Italiana delle Scienze (detta "dei XL"), s.3,
to. X, (1896), 1--81.

\bibitem[EnrMS]{enrMS} Enriques, F.,
{\em Memorie scelte di geometria, vol. I, II, III}.
Zanichelli, Bologna, 1956, 541 pp., 1959, 527 pp., 1966, 456 pp.



\bibitem[Gie77]{gieseker} Gieseker, D., {\it Global moduli for
surfaces of general type}. Invent. Math. {\bf
43}  (1977), no. 3, 233--282.

\bibitem[GP03]{gp}
Guletskii, V.; Pedrini, C.,
{\it Finite-dimensional motives and the conjectures of Beilinson
and Murre}.
Special issue in honor of Hyman Bass on his seventieth
birthday. Part III.
$K$-Theory  {\bf 30}  (2003),  no. 3, 243--263.


\bibitem[Ka67]{kaup}
Kaup, L., {\em Eine K\"unnethformel für Fr\'echetgarben.} Math. Z. {\bf 97} 1967 158–168.

\bibitem[Kim05]{kimura} Kimura, S., {\it Chow groups are finite
dimensional, in some sense.} Math. Ann.
{\bf 331} (2005),  no. 1, 173--201.




\bibitem[MP10]{polmi} Mistretta, E., Polizzi, F.,
{\it Standard isotrivial fibrations with $p_g=q=1$ II},
J. Pure Appl. Algebra {\bf 214} (2010), 344-369.

\bibitem[P10]{PolizziNumProp}
Polizzi, F.,
{\it Numerical properties of isotrivial fibrations}.
Geom. Dedicata 147 (2010), 323–355.

\bibitem[Rei78]{tokyo}
Reid, M.,
{\it Surfaces with $p_g = 0$, $K^2 = 1$}.
J. Fac. Sci. Tokyo Univ. {\bf 25} (1978), 75--92

\bibitem[Ser96]{serrano} Serrano, F.,
{\it Isotrivial fibred surfaces}.
Ann. Mat. Pura Appl. (4) {\bf 171}  (1996), 63--81.

\bibitem[Se77]{langrep} 
Serre, J.-P., {\em Linear representations of finite groups.} Translated from the second French edition by Leonard L. Scott. Graduate Texts in Mathematics, Vol. 42. Springer-Verlag, New York-Heidelberg, 1977. x+170 pp. ISBN: 0-387-90190-6 

\bigskip
\noindent
{\bf Author's Adress:}

\noindent
Ingrid Bauer\\
Mathematisches Institut der Universit\"at Bayreuth, NW II\\
Universit\"atsstr. 30;
D-95447 Bayreuth, Germany\\

\bigskip
\noindent
\end{thebibliography}
\end{document}